\documentclass[11pt,a4paper]{article}
\usepackage[T1]{fontenc}
\usepackage{lmodern}
\usepackage[a4paper,margin=25mm]{geometry}
\usepackage{amsmath,amssymb,amsthm}
\usepackage{microtype}
\usepackage{booktabs}
\usepackage[hidelinks]{hyperref}
\hypersetup{pdftitle={Logarithmic moments of the Rogers--Ramanujan continued fraction and odd zeta values},
	pdfauthor={K. Srinivasa Raghava},
	pdfsubject={Arithmetic logarithmic moments and two cusp expansions},
	pdfkeywords={Rogers--Ramanujan continued fraction, Mellin transform, incomplete gamma function, nonholonomicity}}
\newtheorem{theorem}{Theorem}
\newtheorem{proposition}{Proposition}
\newtheorem{lemma}{Lemma}
\newtheorem{corollary}{Corollary}
\title{Logarithmic moments of the Rogers--Ramanujan\\continued fraction and odd zeta values}
\author{K. Srinivasa Raghava\thanks{Pie Mathematics Association.
		Email: \href{mailto:srinivasaraghavak@gmail.com}{\texttt{srinivasaraghavak@gmail.com}}.}\\[4pt]
	\small\textit{In tribute to the enduring genius of Srinivasa Ramanujan}}
\date{}
\begin{document}
	\maketitle
	\vspace{-1.5em}
	\begin{abstract}
		We study how the two cusp expansions of the Rogers--Ramanujan continued
		fraction control its ordinary and logarithmic Mellin moments.
		Ramanujan's factorizations express the logarithmic transform as a
		combination of products of zeta and quadratic Dirichlet $L$-functions.
		Its adjacent integer and half-integer values give an invertible change
		of coordinates for the corresponding odd special values.
		A companion logarithmic ratio yields separate integral representations
		of odd zeta and odd quadratic $L$-values.
		We prove that every reciprocal-cusp coefficient is a quadratic
		irrational algebraic integer, with an explicit minimal polynomial,
		and that every logarithmic coefficient is quadratic irrational.
		These facts determine the exact poles of three normalized-moment
		generating functions. The logarithmic one has an absolutely convergent
		partial-fraction expansion with a pole at every positive square.
		For the ordinary moments, a two-cusp incomplete-gamma expansion gives
		meromorphic continuation at every positive splitting point.
		A general Mellin lemma yields the corresponding nonholonomicity
		statements. We also establish reflection identities, vertical-strip
		decay and an explicit truncation estimate.
	\end{abstract}
	\noindent\textit{Keywords.} Rogers--Ramanujan continued fraction; Mellin transform;
	Dirichlet $L$-function; cusp expansion; nonholonomicity.
	
	\noindent\textit{2020 Mathematics Subject Classification.}
	Primary 11M06; Secondary 11A55, 11F03, 11P82, 44A20, 11B37.
	
	\section{Introduction and classical inputs}\label{sec:introduction}
	
	For $0<q<1$, write
	\begin{equation}\label{eq:R}
		R(q)=\cfrac{q^{1/5}}{1+\cfrac{q}{1+\cfrac{q^2}{1+\cfrac{q^3}{1+\cdots}}}},
		\qquad \phi=\frac{1+\sqrt5}{2},\qquad \chi(n)=\left(\frac n5\right).
	\end{equation}
	Thus $\chi$ is the real primitive character modulo $5$, with values
	$1,-1,-1,1,0$ on the classes $1,2,3,4,0$. We consider
	\begin{equation}\label{eq:moments}
		\begin{aligned}
			f(x)&=R(e^{-\pi\sqrt{x}}),&
			M(s)&=\int_0^\infty x^{s-1}f(x)\,dx,\\
			&&G(s)&=\int_0^\infty x^{s-1}\log(1+f(x)/\phi)\,dx
			\qquad(\Re s>0).
		\end{aligned}
	\end{equation}
	The equivalent $q$-form of the first transform is
	\begin{equation}\label{eq:q-form}
		\int_0^1 R(q)\left(\log\frac1q\right)^{w-1}\frac{dq}{q}
		=\frac{\pi^w}{2}M(w/2)\qquad(\Re w>0).
	\end{equation}
	We use the square-root variable only to retain the original moment
	normalization; the proofs use $q=e^{-t}$ or $z=q^{1/5}$.
	The same substitution gives the corresponding dictionary for the
	logarithmic kernel in $G$.
	
	The product formula
	\begin{equation}\label{eq:F}
		R(q)=q^{1/5}F(q),\qquad
		F(q)=\prod_{j\ge1}(1-q^j)^{\chi(j)}=\sum_{m\ge0}P_mq^m
	\end{equation}
	and the reciprocal identity
	\begin{equation}\label{eq:reciprocity}
		\bigl(\phi+R(e^{-t})\bigr)\bigl(\phi+R(e^{-4\pi^2/t})\bigr)
		=\phi\sqrt5\qquad(t>0)
	\end{equation}
	are classical \cite{Rogers1894,Watson1929,AndrewsBerndt2005}.
	The modular interpretation is described in \cite{Duke2005}; the product
	factorizations used here are proved in \cite{ChanChanLiu2009}
	and surveyed in \cite{BerndtRebak2025}.
	Recent work on related identities includes \cite{BaruahTalukdar2025}.
	Integral identities with additional $q$-product weights occur in
	\cite{BagisGlasser2009}.
	
	The organizing question is how cusp coefficients determine Mellin
	values and the singularities of moment generating functions.
	Theorem~\ref{thm:log} evaluates the logarithmic transform $G$.
	Its integer and half-integer values lead to an invertible algebraic
	change of coordinates between two moments and the corresponding odd
	zeta and $L$-values; Corollary~\ref{cor:odd-values} states the precise
	span identity. The order $1/2$ requires a separate limiting calculation
	and is expressed in terms of the Glaisher--Kinkelin constant and
	$L'(-1,\chi)$.
	
	The ratio of Ramanujan's two factorizations also gives a companion
	logarithmic kernel whose Mellin transform is a single product
	$\zeta(w)L(w+1,\chi)$; see Proposition~\ref{prop:rrlog-transform}.
	Its integer moments separately express odd zeta values and odd
	quadratic $L$-values. Reciprocity identifies its two leading cusp
	terms and gives a complementary convergent integral after their
	subtraction.
	
	We study the coefficient arithmetic of the ordinary kernel, its
	reciprocal expansion and the logarithmic kernel defining $G$.
	The ordinary coefficient support is classical: Andrews and Hirschhorn
	\cite{Andrews1981,Hirschhorn1998} determine its four exceptional zeros,
	following the asymptotic work of Richmond and Szekeres
	\cite{RichmondSzekeres1978}. For the reciprocal coefficients, a trace
	identity and a positive partition product give an explicit quadratic
	minimal polynomial. For the logarithmic coefficients, a divisor sum
	has a strictly positive coefficient of $\sqrt5$ and a logarithmic upper bound.
	The resulting nonvanishing statements determine the exact pole sets
	in Theorems~\ref{thm:poles} and \ref{thm:log-poles}. In particular,
	the logarithmic generating function has poles at all positive squares.
	
	The product identities, logarithmic expansion and Mellin integration
	are classical inputs and methods. Splitting at a positive point and
	applying reciprocity is the Riemann--Hecke method; related
	incomplete-gamma expansions appear in
	\cite{BringmannFrickeKent2014,McGady2019}.
	General Mellin principles are discussed in
	\cite{FlajoletGourdonDumas1995,Zagier2006}.
	Our applications depend on the explicit coefficient arithmetic.
	Lemma~\ref{lem:Mellin} isolates the analytic mechanism behind the
	pole and nonholonomicity assertions, using exponential separation
	as in \cite{BellEtAl2008} and the D-finite/P-recursive equivalence
	of \cite{Stanley1980}.
	
	All powers of positive numbers use real logarithms. We use the
	holomorphic lift $R(e^{-t})=e^{-t/5}F(e^{-t})$ for $\Re t>0$;
	this fixes the branch for complex arguments. Both sides of
	\eqref{eq:reciprocity} are holomorphic on $\Re t>0$, so the
	identity theorem extends that relation to this half-plane.
	For $\Im\tau>0$, the notation $R(e^{2\pi i\tau})$ accordingly
	means $e^{2\pi i\tau/5}F(e^{2\pi i\tau})$. We write
	$(a;q)_\infty=\prod_{j\ge0}(1-aq^j)$ and use multiple arguments to
	denote products of these factors. The function
	$\Gamma(s,y)=\int_y^\infty t^{s-1}e^{-t}\,dt$, $y>0$, is entire in $s$.
	We use the standard meromorphic continuations of $\zeta(s)$ and
	$L(s,\chi)$, whose Dirichlet series is
	$\sum_{n\ge1}\chi(n)n^{-s}$ for $\Re s>1$; see
	\cite[Chs.~5, 8 and 25]{NIST2010}.
	
	Set
	\begin{equation}\label{eq:H-D}
		H(z)=zF(z^5),\qquad
		D(z)=\frac{\sqrt5H(z)}{\phi+H(z)}=\sum_{k\ge1}d_kz^k.
	\end{equation}
	Initially the second equality is a power series near zero.
	The next lemma proves its analyticity throughout the unit disk.
	For $\Im\tau>0$, one has $H(e^{2\pi i\tau/5})=R(e^{2\pi i\tau})$.
	Reciprocity gives the expansion at the cusp $\tau=0$ in the explicit form
	\[
	R(e^{2\pi i\tau})=\phi^{-1}
	-D(e^{-2\pi i/(5\tau)}).
	\]
	Thus the local variables at the two cusps are
	$e^{2\pi i\tau/5}$ and $e^{-2\pi i/(5\tau)}$.
	This interpretation fits the general setting of modular units
	\cite{KubertLang1981}; the formulas below follow directly from
	classical products.
	
	\begin{lemma}\label{lem:cusp}
		The functions $H$ and $D$ are holomorphic on $|z|<1$, and
		$1+H(z)/\phi$ has no zero there. Moreover,
		\begin{equation}\label{eq:cusp-product}
			1-\phi D(z)=
			\prod_{n\ge1}\frac{1-z^n/\phi+z^{2n}}{1+\phi z^n+z^{2n}}
			=\exp\left(-\sqrt5\sum_{n\ge1}
			\left(\sum_{e\mid n}\frac{\chi(e)}e\right)z^n\right).
		\end{equation}
	\end{lemma}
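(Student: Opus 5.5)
The plan is to reduce everything to one algebraic rewriting of $1-\phi D$, prove the second equality in \eqref{eq:cusp-product} by a logarithmic expansion, and then get holomorphy and the zero-freeness of $1+H/\phi$ by a short argument at the end. Holomorphy of $H$ on $|z|<1$ is immediate, since the product $F(z^5)=\prod_{j}(1-z^{5j})^{\chi(j)}$ converges absolutely and locally uniformly there and has no zeros.

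For the rewriting, use $\phi\sqrt5=\phi^2+1$. Directly from \eqref{eq:H-D}, as formal power series near $0$,
\[
1-\phi D(z)=\frac{\phi+H-\phi\sqrt5H}{\phi+H}=\frac{1-\phi H(z)}{1+H(z)/\phi}.
\]

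For the second equality in \eqref{eq:cusp-product}, put $\zeta=e^{2\pi i/5}$. Then $\zeta+\zeta^{-1}=\phi^{-1}$ and $\zeta^2+\zeta^{-2}=-\phi$, so
\[
1-x/\phi+x^2=(1-\zeta x)(1-\zeta^{-1}x),\qquad 1+\phi x+x^2=(1-\zeta^2x)(1-\zeta^{-2}x).
\]
Expanding each logarithm as $-\sum_m x^m\zeta^{\pm km}/m$, the logarithm of the $n$-th factor is
\[
-\sum_{m\ge1}\frac{z^{nm}}m\sum_{k=1}^4\chi(k)\zeta^{km}.
\]
The Gauss sum of the real character $\chi$ modulo $5$ is $\sum_k\chi(k)\zeta^{km}=\chi(m)\sqrt5$. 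Summing over $n$ and collecting $N=nm$ then gives the exponent $-\sqrt5\sum_N z^N\sum_{e\mid N}\chi(e)/e$. The coefficients are $O(\log N)$, so this series converges absolutely on $|z|<1$. Hence the product in \eqref{eq:cusp-product} is holomorphic and zero-free on the whole disk; call it $E(z)$.

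The main obstacle is the first equality, $(1-\phi H)/(1+H/\phi)=E$. It is an identity between power series at $0$, equivalent to Ramanujan's factorizations of $R(q)^{-1}-\phi$ and $R(q)^{-1}+\phi^{-1}$ into products over fifth roots of unity. I would take these from \cite{ChanChanLiu2009}, which the paper lists among its classical inputs, and only check the normalization $z=q^{1/5}$ against them. If a self-contained check is wanted, the route I would try first is this. Note that $R^{-1}-1-R=(1/R-\phi)(1/R+\phi^{-1})R$, and that its left side equals $f(-z)/(zf(-z^5))$. Split $\prod_{5\nmid n}(1-z^n)$ via the $\zeta$-twisted Jacobi triple product; this would identify each of the two factors separately. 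Doing that split correctly is the part I am least sure of, which is why I would lean on the citation.

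Granting the identity near $0$, both sides extend by the identity theorem, giving $1-\phi H=(1+H/\phi)E$ on $|z|<1$. Now suppose $1+H(z_0)/\phi=0$ at some $z_0$. Then $1-\phi H(z_0)=0$ as well, so $H(z_0)=-\phi$ and $H(z_0)=\phi^{-1}$ simultaneously, which is impossible. Therefore $1+H/\phi$ has no zero on the disk. It follows that $D=\sqrt5H/(\phi+H)=(1-E)/\phi$ is holomorphic on $|z|<1$, and \eqref{eq:cusp-product} holds throughout the disk.
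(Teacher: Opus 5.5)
Your proposal is correct and follows the paper's route. You take the ratio identity $(1-\phi H)/(1+H/\phi)=Q_1/Q_2$ from Ramanujan's two factorizations in Chan--Chan--Liu, extend the cross-multiplied form to the disk by the identity theorem, exclude $H=-\phi$ because it would also force $1-\phi H=0$ (the paper writes this as $0=(1+\phi^2)Q_2(z)\ne0$), and obtain the exponential form from the root-of-unity factorization and the Gauss sum $\sum_k\chi(k)e^{2\pi ikm/5}=\sqrt5\,\chi(m)$. The only difference is cosmetic: you get holomorphy and zero-freeness of the product from its absolutely convergent exponential form, while the paper reads them off the factors $(1-\omega^{\pm1}z^n)$ and $(1-\omega^{\pm2}z^n)$ directly.
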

	\begin{proof}
		For $0<q<1$, put $z=q^{1/5}$ and set
		\[
		Q_1(z)=\prod_{n\ge1}(1-z^n/\phi+z^{2n}),\qquad
		Q_2(z)=\prod_{n\ge1}(1+\phi z^n+z^{2n}).
		\]
		Ramanujan's two factorizations
		\cite[Corollary~2, Eqs.~(1.8)--(1.9)]{ChanChanLiu2009},
		also recorded in \cite[pp.~21--24]{AndrewsBerndt2005}, state that
		\begin{equation}\label{eq:factorizations}
			\begin{aligned}
				\frac{1+R(q)/\phi}{\sqrt{R(q)}}
				&=q^{-1/10}
				\sqrt{\frac{(q;q)_\infty}{(q^5;q^5)_\infty}}\,Q_1(z)^{-1},\\
				\frac{1-\phi R(q)}{\sqrt{R(q)}}
				&=q^{-1/10}
				\sqrt{\frac{(q;q)_\infty}{(q^5;q^5)_\infty}}\,Q_2(z)^{-1}.
			\end{aligned}
		\end{equation}
		Taking their ratio gives
		$Q_1/Q_2=(1-\phi H)/(1+H/\phi)=1-\phi D$.
		If $\omega=e^{2\pi i/5}$, the factors of $Q_1$ are
		$(1-\omega z^n)(1-\omega^{-1}z^n)$, and those of $Q_2$ use
		$\omega^2,\omega^{-2}$. These products are nonzero and analytic for
		$|z|<1$. The identity theorem extends the cross-multiplied identity
		to that disk:
		$(1+H/\phi)Q_1=(1-\phi H)Q_2$.
		If $H=-\phi$, this gives $0=(1+\phi^2)Q_2(z)\ne0$,
		since $Q_2$ is nonzero. Thus $D$ is analytic there and the ratio identity holds.
		Expanding its logarithm and using
		$2\cos(2\pi e/5)-2\cos(4\pi e/5)=\sqrt5\chi(e)$ proves
		\eqref{eq:cusp-product}. \end{proof}
	
	\section{Logarithmic transform and special values}\label{sec:arithmetic}
	
	\begin{theorem}\label{thm:log}
		The integral $G(s)$ in \eqref{eq:moments} has a meromorphic continuation
		with its only pole at zero, simple and of residue
		$\log(\sqrt5/\phi)$. As an identity of meromorphic functions,
		\begin{equation}\label{eq:log-evaluation}
			\begin{aligned}
				G(s)=\Gamma(2s)\left(\frac5\pi\right)^{2s}
				\bigl\{&\sqrt5\,\zeta(2s)L(2s+1,\chi)\\
				&-5^{-2s}\zeta(2s+1)L(2s,\chi)\\
				&-(1-5^{-4s})\zeta(2s)\zeta(2s+1)\bigr\}.
			\end{aligned}
		\end{equation}
		It satisfies $G(s)=-16^sG(-s)$.
	\end{theorem}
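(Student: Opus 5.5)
The plan is to turn $\log(1+f(x)/\phi)$ into a Lambert-type series using the first factorization in \eqref{eq:factorizations}, and then to Mellin-transform it term by term. Writing $R(q)=q^{1/5}F(q)$, the first line of \eqref{eq:factorizations} becomes
\[
1+\frac{R(q)}{\phi}=\sqrt{F(q)\,\frac{(q;q)_\infty}{(q^5;q^5)_\infty}}\;Q_1(z)^{-1},\qquad z=q^{1/5},
\]
and the powers $q^{\pm1/10}$ cancel. By Lemma~\ref{lem:cusp}, $1+H/\phi$ does not vanish on the disk, so the logarithm is analytic for $0<q<1$. I then expand each factor.

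\emph{The three logarithms.} First, $\log F=\sum_j\chi(j)\log(1-q^j)=-\sum_{j,m}\chi(j)q^{jm}/m$. Second, $\log(q;q)_\infty$ and $\log(q^5;q^5)_\infty$ are the usual $-\sum\sigma_{-1}$-type series. Third, $\log Q_1=-\sum_{n,m}2\cos(2\pi m/5)\,z^{nm}/m$. Here I use that $2\cos(2\pi m/5)$ equals $-\tfrac12+\tfrac{\sqrt5}{2}\chi(m)$ when $5\nmid m$, and equals $2$ when $5\mid m$. This makes every coefficient a combination of divisor sums built from $1$, $\chi$ and $1/e$, in the variables $q$ and $q^{1/5}$.

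\emph{Termwise Mellin transform.} With $q=e^{-\pi\sqrt x}$, each term uses
\[
\int_0^\infty x^{s-1}e^{-a\pi\sqrt x}\,dx=2\Gamma(2s)(a\pi)^{-2s}.
\]
For $\Re s$ large the double sums converge absolutely, so the interchange is justified. The resulting Dirichlet series factor as products such as $\zeta(2s)L(2s+1,\chi)$, $\zeta(2s+1)L(2s,\chi)$ and $\zeta(2s)\zeta(2s+1)$. The terms in $z=q^{1/5}$ produce the factor $5^{2s}$, hence the normalization $(5/\pi)^{2s}$. Terms with $5\mid m$, together with the $(q^5;q^5)$ factor, produce the $5^{-2s}$ and $5^{-4s}$ corrections. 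I expect the main obstacle to be the bookkeeping that collapses all these pieces into exactly the three terms of \eqref{eq:log-evaluation}, with the $1/2$ coming from the square root cancelling the factor $2$ from the Mellin integral. I will check the combination on the $\chi$-free part and on the $\chi$-part separately.

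\emph{Continuation, the functional equation, and the pole.} The right side of \eqref{eq:log-evaluation} is meromorphic in $s$. For the functional equation and the pole structure I will argue directly from \eqref{eq:reciprocity}. Put $t=\pi\sqrt x$; then $4\pi^2/t=\pi\sqrt{16/x}$. Taking logarithms in \eqref{eq:reciprocity} gives
\[
g(x)+g(16/x)=\log(\sqrt5/\phi),\qquad g(x)=\log(1+f(x)/\phi).
\]

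\emph{Riemann--Hecke splitting.} I split $G$ at $x=4$ and substitute $x\mapsto16/x$ on $(0,4)$. This gives
\[
G(s)=\int_4^\infty x^{s-1}g(x)\,dx-\frac{4^{s}\log(\sqrt5/\phi)}{s}\cdot(\text{sign/normalization})+16^s\!\int_4^\infty x^{-s-1}g(x)\,dx.
\]
Since $g(x)$ decays exponentially as $x\to\infty$, both integrals are entire in $s$. So the only pole is at $s=0$; it is simple, with residue $\log(\sqrt5/\phi)$, to be checked from the sign of the boundary term. The expression is visibly antisymmetric under $s\mapsto-s$ after multiplying by $16^{-s/2}$, which yields $G(s)=-16^sG(-s)$. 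Finally, the identity theorem transfers \eqref{eq:log-evaluation} from $\Re s$ large to all $s$. This also shows that the apparent poles of the right side, from $\Gamma(2s)$ and from $\zeta$ at $2s=1$ and $2s+1=1$, cancel except at $s=0$.
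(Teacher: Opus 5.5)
Your proposal follows the paper's proof essentially step for step. Both use the first factorization in \eqref{eq:factorizations} and the cosine reduction leading to \eqref{eq:Lambert}. Both integrate termwise on a right half-plane (the paper takes $\Re s>1/2$) and split at $x=4$ using $g(x)+g(16/x)=h_0$, where $h_0=\log(\sqrt5/\phi)$. Both finish with the identity theorem, which transfers \eqref{eq:log-evaluation} and removes the spurious poles.

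The one step that is wrong as written is your split formula. Carrying out $x\mapsto16/x$ on $(0,4)$ gives, for $\Re s>0$,
\[
G(s)=\frac{4^sh_0}{s}+\int_4^\infty x^{s-1}g(x)\,dx-16^s\int_4^\infty x^{-s-1}g(x)\,dx .
\]
So the boundary term has a plus sign and the last integral a minus sign. With your $+16^s$, the integral part of $4^{-s}G(s)$ would be \emph{even} in $s$, and the antisymmetry you claim would fail. With the correct sign,
\[
4^{-s}G(s)=\frac{h_0}{s}+\int_4^\infty\bigl(4^{-s}x^{s-1}-4^{s}x^{-s-1}\bigr)g(x)\,dx
\]
is odd, with an entire integral part. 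This gives the simple pole at zero with residue $+h_0$ and the relation $G(s)=-16^sG(-s)$. The paper's further substitution $x=4u^2$ is only cosmetic.

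A minor bookkeeping correction: your attribution of the $5$-adic corrections is off. Once the $-\tfrac12$ term in $2\cos(2\pi m/5)$ is extended to all $m$, the multiples of $5$ in $\log Q_1$ cancel $\log(q;q)_\infty$ exactly. The $5^{-2s}$ term then comes from $\log F$, and the $5^{-4s}$ term from $(q^5;q^5)_\infty$.
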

	\begin{proof}
		Use the first factorization in \eqref{eq:factorizations}. Its logarithm is
		\[
		\log(1+R(q)/\phi)
		=\tfrac12\log F(q)
		+\tfrac12\log\frac{(q;q)_\infty}{(q^5;q^5)_\infty}
		-\log Q_1(q^{1/5}),
		\]
		where $Q_1$ is the product in the proof of Lemma~\ref{lem:cusp}.
		For $z=q^{1/5}$, expansion of these logarithms gives
		\begin{equation}\label{eq:Lambert}
			\begin{aligned}
				2\log(1+R(q)/\phi)
				={}&\sqrt5\sum_{k\ge1}\frac{\chi(k)}k\frac{z^k}{1-z^k}
				-\sum_{k\ge1}\frac1k\frac{z^k}{1-z^k}\\
				&-\sum_{j,k\ge1}\frac{\chi(j)}k z^{5jk}
				+\sum_{k\ge1}\frac1k\frac{z^{25k}}{1-z^{25k}}.
			\end{aligned}
		\end{equation}
		For example, the trigonometric reduction needed here is
		$2\cos(2\pi k/5)=(\sqrt5\chi(k)-1+5\,\mathbf1_{5\mid k})/2$;
		the terms at multiples of $5$ cancel the corresponding Euler-product
		logarithm.
		
		If $\Re s>1/2$, all four sums may be integrated absolutely.
		Using
		\[
		\int_0^\infty x^{s-1}e^{-\pi N\sqrt{x}/5}\,dx
		=2\Gamma(2s)\left(\frac5{\pi N}\right)^{2s},
		\]
		the first Lambert series contributes
		$\zeta(2s)L(2s+1,\chi)$, the third contributes
		$5^{-2s}\zeta(2s+1)L(2s,\chi)$, and the remaining two contribute
		$(1-5^{-4s})\zeta(2s)\zeta(2s+1)$. This proves
		\eqref{eq:log-evaluation} on that half-plane.
		
		The product and reciprocity give
		\begin{equation}\label{eq:endpoints}
			\begin{aligned}
				f(x)&=e^{-\pi\sqrt{x}/5}\bigl(1+O(e^{-\pi\sqrt{x}})\bigr)
				&& (x\to\infty),\\
				f(x)&=\phi^{-1}-\frac{\sqrt5}{\phi}e^{-4\pi/(5\sqrt{x})}
				+O(e^{-8\pi/(5\sqrt{x})}) &&(x\to0^+).
			\end{aligned}
		\end{equation}
		Thus the logarithmic integrand tends to
		$h_0=\log(\sqrt5/\phi)$ at zero, with an exponentially small error.
		Reciprocity gives
		\[
		\log(1+f(x)/\phi)+\log(1+f(16/x)/\phi)=h_0.
		\]
		Split the defining integral at $4$, use this relation on $(0,4)$,
		and put $x=4u^2$ in the resulting upper integrals. Then
		\[
		4^{-s}G(s)=\frac{h_0}{s}
		+2\int_1^\infty
		(u^{2s-1}-u^{-2s-1})\log(1+f(4u^2)/\phi)\,du.
		\]
		The integral on the right is entire in $s$ and odd. This proves
		the continuation, the single pole and the functional equation.
		The identity theorem extends \eqref{eq:log-evaluation} and shows
		that all its other apparent poles are removable.
	\end{proof}
	
	The cancellation at $s=1/2$ follows from
	$\sqrt5L(2,\chi)=(1-5^{-2})\zeta(2)$. The values at integer and
	half-integer orders have uniform formulas. We use the Bernoulli
	polynomials and generalized Bernoulli numbers in the convention
	\begin{equation}\label{eq:Bernoulli}
		\frac{te^{ut}}{e^t-1}=\sum_{j\ge0}B_j(u)\frac{t^j}{j!},
		\qquad B_j=B_j(0),\qquad
		B_{j,\chi}=5^{j-1}\sum_{a=1}^5\chi(a)B_j(a/5).
	\end{equation}
	In particular, $B_{2,\chi}=4/5$ and $B_{4,\chi}=-8$.
	
	\begin{corollary}\label{cor:log-special}
		For every integer $n\ge1$,
		\begin{equation}\label{eq:G-integers}
			\begin{aligned}
				G(n)=\frac{(-1)^{n+1}}{4n}\biggl\{&
				100^nB_{2n}\bigl[\sqrt5L(2n+1,\chi)
				-(1-5^{-4n})\zeta(2n+1)\bigr]\\
				&-\sqrt5\left(\frac4{25}\right)^n
				B_{2n,\chi}\zeta(2n+1)\biggr\},
			\end{aligned}
		\end{equation}
		and
		\begin{equation}\label{eq:G-halves}
			\begin{aligned}
				G\left(n+\tfrac12\right)
				={}&\frac{(-1)^n2^{2n+1}\pi}{(2n+1)(2n+2)}
				\biggl\{\bigl[B_{2n+2,\chi}\\
				&\quad -(5^{2n+1}-5^{-2n-1})B_{2n+2}\bigr]\zeta(2n+1)
				-B_{2n+2}L(2n+1,\chi)\biggr\}.
			\end{aligned}
		\end{equation}
		Thus $G(n)$ is an algebraic combination of two odd special values,
		whereas $G(n+1/2)/\pi$ is a rational combination of the same values.
	\end{corollary}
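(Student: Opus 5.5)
We substitute $s=n$ and $s=n+\tfrac12$ directly into the meromorphic identity \eqref{eq:log-evaluation} of Theorem~\ref{thm:log}. In each of the three products of $\zeta$ and $L(\cdot,\chi)$, exactly one factor sits at an even integer argument for the character's parity and can be evaluated in closed form. The other factor sits at an odd argument and is left as a special value. No poles are met, since $2s\ge2$ in both cases.

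For $s=n$ the prefactor is $\Gamma(2n)(5/\pi)^{2n}=(2n-1)!\,5^{2n}\pi^{-2n}$. We use Euler's formula
\[
\zeta(2n)=\frac{(-1)^{n+1}(2\pi)^{2n}B_{2n}}{2(2n)!}.
\]
For the even character $\chi$ (conductor $5$, Gauss sum $\sqrt5$) we use the standard evaluation
\[
L(2n,\chi)=(-1)^{n+1}\frac{\sqrt5}{2}\Big(\frac{2\pi}{5}\Big)^{2n}\frac{B_{2n,\chi}}{(2n)!},
\]
with $B_{j,\chi}$ in the convention \eqref{eq:Bernoulli}. We will check the normalization against $B_{2,\chi}=4/5$ and the identity $\sqrt5L(2,\chi)=\tfrac{24}{25}\zeta(2)$ quoted after the theorem. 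Inserting these values, the powers of $\pi$ cancel and $(2n-1)!/(2n)!=1/(2n)$. The first and third terms combine into
\[
100^nB_{2n}\bigl[\sqrt5L(2n+1,\chi)-(1-5^{-4n})\zeta(2n+1)\bigr]\frac{(-1)^{n+1}}{4n},
\]
using $5^{2n}2^{2n}=100^n$. The middle term gives
\[
-\sqrt5\,5^{-2n}\cdot 5^{2n}\cdot 4^n\cdot 5^{-2n}\,B_{2n,\chi}\,\zeta(2n+1)\frac{(-1)^{n+1}}{4n},
\]
which is the $(4/25)^n$ term. This yields \eqref{eq:G-integers}.

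For $s=n+\tfrac12$ we have $2s=2n+1$ and $2s+1=2n+2$. Now the odd values are $\zeta(2n+1)$ and $L(2n+1,\chi)$, while $L(2n+2,\chi)$ and $\zeta(2n+2)$ are evaluated by the same two formulas with $n\mapsto n+1$. The prefactor is $(2n)!\,(5/\pi)^{2n+1}$, which leaves exactly one net factor of $\pi$, since $\pi^{2n+2}/\pi^{2n+1}=\pi$. The ratio $(2n)!/(2n+2)!$ produces $1/((2n+1)(2n+2))$, and the sign $(-1)^{n+2}=(-1)^n$ appears.

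We then collect the coefficient of $\zeta(2n+1)$, which receives contributions from the second and third terms:
\begin{itemize}
\item the second term gives a $B_{2n+2,\chi}$ contribution, where the factor $5^{-2s}=5^{-2n-1}$ cancels against $(5/\pi)^{2n+1}$ and $(2\pi/5)^{2n+2}$, leaving $\sqrt5\cdot\sqrt5/5$ up to the factor $2^{2n+1}$;
\item the third term gives $-(1-5^{-4n-2})5^{2n+1}B_{2n+2}=-(5^{2n+1}-5^{-2n-1})B_{2n+2}$.
\end{itemize}
The first term gives the coefficient of $L(2n+1,\chi)$. Here $\sqrt5\cdot\sqrt5/2$ against $5^{2n+1}(2/5)^{2n+2}$ should reduce to $-B_{2n+2}$, after the overall sign $-1$ from the bracket convention is absorbed. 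All three terms then share the normalization $2^{2n+1}\pi/((2n+1)(2n+2))$, which gives \eqref{eq:G-halves}.

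The concluding remark follows by inspection. The coefficients in \eqref{eq:G-integers} lie in $\mathbb{Q}(\sqrt5)$, while those in \eqref{eq:G-halves} divided by $\pi$ are rational, because $B_{j,\chi}\in\mathbb{Q}$ and the two factors of $\sqrt5$ multiply to $5$.

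The main obstacle is purely bookkeeping: fixing the exact normalization and sign of the generalized Bernoulli formula for $L(2m,\chi)$ in the paper's convention, and tracking the powers of $5$ and $2$ and the signs $(-1)^{n}$ versus $(-1)^{n+1}$ in the half-integer case. We will guard against errors by checking the cases $n=0$ formally and $n=1$ numerically, using $B_{4,\chi}=-8$.
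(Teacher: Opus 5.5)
Your strategy is the paper's: evaluate \eqref{eq:log-evaluation} at $s=n$ and $s=n+\tfrac12$ using \eqref{eq:even-values}. The paper derives the $L$-value formula from the Fourier series of $B_{2n}(u)$ and the Gauss sum $\sum_a\chi(a)\cos(2\pi ak/5)=\sqrt5\,\chi(k)$; citing it as standard is acceptable. Your integer-order bookkeeping is correct, as is your handling of the third term at half-integer order.

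However, the collection step at $s=n+\tfrac12$ is wrong as written, because you have swapped the roles of the first and second terms. With $2s=2n+1$, the first term is $\sqrt5\,\zeta(2n+1)L(2n+2,\chi)$. It contains no $L(2n+1,\chi)$ at all. It is the source of the $B_{2n+2,\chi}\zeta(2n+1)$ contribution, since $\sqrt5\cdot\frac{\sqrt5}{2}\cdot(5/\pi)^{2n+1}(2\pi/5)^{2n+2}=2^{2n+1}\pi$, up to the factor $(-1)^n/((2n+1)(2n+2))$ coming from the factorials and signs. Your factor ``$\sqrt5\cdot\sqrt5/5$'' belongs here, and no $5^{-2s}$ enters.

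The second term is $-5^{-2n-1}\zeta(2n+2)L(2n+1,\chi)$. It involves $\zeta(2n+2)$, hence $B_{2n+2}$ and not $B_{2n+2,\chi}$, and it produces $-B_{2n+2}L(2n+1,\chi)$. Here $5^{-2n-1}$ cancels the $5^{2n+1}$ of the prefactor, no $\sqrt5$ occurs, and the minus sign is simply the explicit sign of that term, not a ``bracket convention.''

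As you describe them, neither step computes what it claims:
\begin{itemize}
\item In your first bullet, $5^{-2n-1}\cdot5^{2n+1}\cdot5^{-2n-2}=5^{-2n-2}$ does not cancel, and the second term has no even-argument $L$-value from which $B_{2n+2,\chi}$ could arise.
\item In the sentence after your list, multiplying $\sqrt5\cdot\sqrt5/2$ by $5^{2n+1}(2/5)^{2n+2}$ yields a $B_{2n+2,\chi}$ term, not $-B_{2n+2}$.
\end{itemize}
Once the attributions are corrected, all three contributions carry the common factor $(-1)^n2^{2n+1}\pi/((2n+1)(2n+2))$, and \eqref{eq:G-halves} follows exactly. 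This is a bookkeeping repair, not a missing idea.

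For your sanity check, use $n=1$ against \eqref{eq:G1} and \eqref{eq:G-three-halves}. The case $n=0$ is not a meaningful test: $s=0$ is the pole of $G$, and at $s=\tfrac12$ the half-integer formula degenerates to $0\cdot\zeta(1)$. The paper treats that order separately in Proposition~\ref{prop:G-half}.
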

	\begin{proof}
		The Fourier series of $B_{2n}(u)$ and the finite Gauss sum
		\[
		\sum_{a=1}^5\chi(a)\cos(2\pi ak/5)=\sqrt5\,\chi(k)
		\]
		give the standard even-value formulas
		\begin{equation}\label{eq:even-values}
			\begin{aligned}
				\zeta(2n)&=\frac{(-1)^{n+1}(2\pi)^{2n}B_{2n}}{2(2n)!},\\
				L(2n,\chi)&=\frac{(-1)^{n+1}\sqrt5}{2(2n)!}
				\left(\frac{2\pi}{5}\right)^{2n}B_{2n,\chi}.
			\end{aligned}
		\end{equation}
		The Fourier series is absolutely convergent, so summation against
		the finite character sum is legitimate; see
		\cite[\S24.8]{NIST2010} and \cite[Ch.~4]{Washington1997}.
		Substituting these values in \eqref{eq:log-evaluation}, first at
		$s=n$ and then at $s=n+1/2$, proves the two identities.
	\end{proof}
	
	The first adjacent pair is
	\begin{align}
		G(1)&=\frac{25\sqrt5}{6}L(3,\chi)
		-\left(\frac{104}{25}+\frac{4\sqrt5}{125}\right)\zeta(3),
		\label{eq:G1}\\
		G\left(\tfrac32\right)
		&=\pi\left\{\frac{4792}{1875}\zeta(3)-\frac1{45}L(3,\chi)\right\}.
		\label{eq:G-three-halves}
	\end{align}
	The remaining positive
	half-integer has a different form. Let $A$ be the Glaisher--Kinkelin
	constant, normalized by $\log A=1/12-\zeta'(-1)$
	\cite[Eq.~5.17.7]{NIST2010}.
	
	\begin{proposition}\label{prop:G-half}
		The logarithmic moment at order $1/2$ is
		\begin{equation}\label{eq:G-half}
			G\left(\tfrac12\right)=\pi\left\{
			\frac{48}{5}\log A+2L'(-1,\chi)-\frac45
			-\frac{13}{15}\log5-\frac{\log\phi}{3\sqrt5}\right\}.
		\end{equation}
		Equivalently,
		\begin{equation}\label{eq:q-log-integral}
			\int_0^1\log\left(1+\frac{R(q)}{\phi}\right)\frac{dq}{q}
			=\frac{\pi}{2}G\left(\tfrac12\right).
		\end{equation}
	\end{proposition}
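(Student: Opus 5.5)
The plan is to obtain $G(\tfrac12)$ as a limit of the meromorphic identity \eqref{eq:log-evaluation} of Theorem~\ref{thm:log}. Theorem~\ref{thm:log} says $G$ is holomorphic at $s=\tfrac12$. Inside the braces, however, the first and third terms each have a simple pole there, coming from $\zeta(2s)$. Put $2s=1+\varepsilon$. The prefactor $\Gamma(1+\varepsilon)(5/\pi)^{1+\varepsilon}$ is holomorphic and equals $5/\pi$ at $\varepsilon=0$. The residue of the braces is $\sqrt5L(2,\chi)-(1-5^{-2})\zeta(2)=0$, using the cancellation noted after Theorem~\ref{thm:log}. Hence the braces are holomorphic at $\varepsilon=0$, and
\[
G(\tfrac12)=\frac5\pi\,C,
\]
where $C$ is the constant term of the braces. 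Only this constant term matters; the derivative of the prefactor never enters, because it would multiply the vanishing residue.

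To compute $C$, I will insert $\zeta(1+\varepsilon)=\varepsilon^{-1}+\gamma+O(\varepsilon)$ and $5^{-2-2\varepsilon}=5^{-2}(1-2\varepsilon\log5+\cdots)$. This gives
\[
C=\sqrt5\bigl(\gamma L(2,\chi)+L'(2,\chi)\bigr)-\tfrac15\zeta(2)L(1,\chi)
-\tfrac{24}{25}\bigl(\gamma\zeta(2)+\zeta'(2)\bigr)-\tfrac{2\log5}{25}\zeta(2).
\]
The $\gamma$-terms cancel by the same relation $\sqrt5L(2,\chi)=\tfrac{24}{25}\zeta(2)$. The remaining inputs are as follows.
\begin{itemize}
\item The classical value $L(1,\chi)=2\log\phi/\sqrt5$, which produces the $\log\phi/(3\sqrt5)$ term after multiplying by $\zeta(2)=\pi^2/6$ and $5/\pi$.
\item The derivative of the functional equation of $\zeta$ at $s=-1$. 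This expresses $\zeta'(2)/\zeta(2)$ through $\zeta'(-1)/\zeta(-1)$, $\log 2\pi$ and $\gamma$, giving
\[
\zeta'(2)=\tfrac{\pi^2}{6}\bigl(\gamma+\log2\pi-12\log A+1\bigr)
\]
via $\log A=\tfrac1{12}-\zeta'(-1)$.
\item The derivative of the functional equation of the even primitive character $\chi$ of conductor $5$, namely $\Lambda(s)=(5/\pi)^{s/2}\Gamma(s/2)L(s,\chi)=\Lambda(1-s)$, taken at $s=-1$. This expresses $L'(2,\chi)/L(2,\chi)$ in terms of $L'(-1,\chi)/L(-1,\chi)$, $\gamma$, $\log 2\pi$ and $\log5$. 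Here $L(-1,\chi)=-B_{2,\chi}/2=-2/5$ and $L(2,\chi)$ comes from \eqref{eq:even-values}.
\end{itemize}

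After substituting these, the $\gamma$- and $\log2\pi$-contributions from $\sqrt5L'(2,\chi)$ and $\tfrac{24}{25}\zeta'(2)$ must cancel, again because $\sqrt5L(2,\chi)=\tfrac{24}{25}\zeta(2)$. What should survive are the following.
\begin{itemize}
\item A multiple of $\log A$, which should come out as $\tfrac{48}{5}$ after the factor $5/\pi$.
\item A multiple of $L'(-1,\chi)$, which should come out as $2$.
\item The rational constant $-\tfrac45$, coming from the "$+1$" in the expression for $\zeta'(2)$ together with the corresponding digamma constants in the $L$-function equation.
\item The $\log5$ terms, collected from the conductor factor, from the $5^{-2-2\varepsilon}$ expansion and from $L'(2,\chi)$; together these give $-\tfrac{13}{15}\log5$.
\end{itemize}

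The main obstacle is this bookkeeping in the third step, specifically getting the digamma values $\psi(-\tfrac12)$, $\psi(\tfrac32)$ (or the equivalent $\Gamma$-derivatives) and the conductor term right. My first move will be to write both functional equations in the uniform shape $\log\Lambda$ even about $\tfrac12$ and differentiate once, so that the two computations are visibly parallel and the cancellations are transparent. I would sanity-check the final constant numerically against a direct quadrature of \eqref{eq:q-log-integral}, which converges quickly by \eqref{eq:endpoints}.

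Finally, the equivalent form \eqref{eq:q-log-integral} follows from the substitution $q=e^{-\pi\sqrt x}$. Then $\log(1/q)=\pi\sqrt x$ and $dq/q=-\tfrac{\pi}{2}x^{-1/2}dx$, so the left side equals $\tfrac\pi2\int_0^\infty x^{-1/2}\log(1+f(x)/\phi)\,dx=\tfrac\pi2 G(\tfrac12)$. This is the logarithmic analogue of \eqref{eq:q-form} with $w=1$, and the integral converges absolutely at both ends by \eqref{eq:endpoints}.
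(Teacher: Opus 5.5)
Your route is the paper's route. You take the limit at $s=\tfrac12$ in \eqref{eq:log-evaluation}, where the braces reduce to $\Phi'(1)-\tfrac15\zeta(2)L(1,\chi)$ with $\Phi(w)=\sqrt5L(w+1,\chi)-(1-5^{-2w})\zeta(w+1)$. You insert $L(1,\chi)=2\log\phi/\sqrt5$ and log-differentiate both functional equations at $w=2$. The $\gamma$ and $\log2\pi$ terms then cancel through $\frac{5\sqrt5}{\pi}L(2,\chi)=\frac{24}{5\pi}\zeta(2)=\frac{4\pi}{5}$. Your constant term $C$, the observation that the prefactor's derivative never enters, and the change of variables for \eqref{eq:q-log-integral} are all correct.

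There is, however, a concrete error in the formula you state for $\zeta'(2)$: the $+1$ should not be there. Log-differentiating $\zeta(1-w)=2(2\pi)^{-w}\cos(\pi w/2)\Gamma(w)\zeta(w)$ at $w=2$ gives $\zeta'(2)/\zeta(2)=\log2\pi+\gamma-1-\zeta'(-1)/\zeta(-1)$. Since $\zeta'(-1)/\zeta(-1)=12\log A-1$, the $-1$ coming from $\psi(2)=1-\gamma$ is cancelled. Hence $\zeta'(2)/\zeta(2)=\gamma+\log2\pi-12\log A$. Numerically $\zeta'(2)\approx-0.9375$, whereas your formula gives a positive value; it looks as if you used $\psi(2)=-\gamma$.

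This also makes your account of the rational constant wrong. The correct $L$-side identity is $L'(2,\chi)/L(2,\chi)=\tfrac52L'(-1,\chi)+\gamma-1+\log(2\pi/5)$. Combined with your $\zeta'(2)$, it would produce $-\tfrac85$ instead of $-\tfrac45$. In fact the $-\tfrac45$ comes entirely from the $L$-side, as $\tfrac{4\pi}{5}\cdot(-1)$. On the $\zeta$-side the digamma constant is absorbed by $\zeta(-1)=-\tfrac1{12}$ and the normalization $\log A=\tfrac1{12}-\zeta'(-1)$, while $L'(-1,\chi)$ is left unreduced.

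With this correction the rest of your bookkeeping goes through as you predict:
\begin{itemize}
\item $\tfrac{4\pi}{5}\cdot12\log A$ gives $\tfrac{48}{5}\pi\log A$;
\item $\tfrac{4\pi}{5}\cdot\tfrac52L'(-1,\chi)$ gives $2\pi L'(-1,\chi)$;
\item $-\tfrac45\pi\log5$ from the conductor combines with $-\tfrac{\pi}{15}\log5$ from the $5^{-2w}$ factor to give $-\tfrac{13}{15}\pi\log5$.
\end{itemize}
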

	\begin{proof}
		For this proof put
		\[
		\Phi(w)=\sqrt5L(w+1,\chi)-(1-5^{-2w})\zeta(w+1).
		\]
		Equation~\eqref{eq:even-values} gives $\Phi(1)=0$.
		Since the residue of $\zeta(w)$ at $1$ is $1$,
		$\lim_{w\to1}\zeta(w)\Phi(w)=\Phi'(1)$.
		The logarithmic sine series and the same Gauss sum also give
		\[
		L(1,\chi)
		=-\frac1{\sqrt5}\sum_{a=1}^4\chi(a)\log(2\sin(\pi a/5))
		=\frac{2\log\phi}{\sqrt5}.
		\]
		One may insert an Abel factor in its Fourier series before taking
		its boundary value; the character series converges by Dirichlet's
		test. The last equality uses $\sin(2\pi/5)/\sin(\pi/5)=\phi$.
		Taking the limit in \eqref{eq:log-evaluation} now yields
		\begin{equation}\label{eq:G-half-derivative}
			G\left(\tfrac12\right)
			=\frac{5\sqrt5}{\pi}L'(2,\chi)
			-\frac{24}{5\pi}\zeta'(2)
			-\frac{\pi}{15}\log5-\frac{\pi}{3\sqrt5}\log\phi.
		\end{equation}
		
		Logarithmic differentiation at $w=2$ of the functional equations
		for $\zeta$ and the primitive even character $\chi$
		\cite[\S\S25.4 and 25.15]{NIST2010} gives
		\begin{align*}
			\frac{\zeta'(2)}{\zeta(2)}
			&=\gamma+\log(2\pi)-12\log A,\\
			\frac{L'(2,\chi)}{L(2,\chi)}
			&=\frac52L'(-1,\chi)+\gamma-1+\log(2\pi/5).
		\end{align*}
		Here $\gamma$ is Euler's constant. For the second identity,
		the functional equation is
		\[
		L(1-w,\chi)=
		\frac{2\sqrt5\,5^{w-1}\Gamma(w)}{(2\pi)^w}
		\cos(\pi w/2)L(w,\chi);
		\]
		use $\Gamma'(2)/\Gamma(2)=1-\gamma$ and
		$L(-1,\chi)=-B_{2,\chi}/2=-2/5$.
		For the first identity the analogous equation has the factor
		$2$ in place of $2\sqrt5\,5^{w-1}$; use
		$\zeta(-1)=-1/12$ and the definition of $A$.
		Since
		\[
		\frac{5\sqrt5}{\pi}L(2,\chi)
		=\frac{24}{5\pi}\zeta(2)=\frac{4\pi}{5},
		\]
		the terms containing $\gamma$ and $\log(2\pi)$ cancel in
		\eqref{eq:G-half-derivative}. This proves \eqref{eq:G-half}.
		The change of variable $q=e^{-\pi\sqrt{x}}$ proves
		\eqref{eq:q-log-integral}.
	\end{proof}
	
	The next adjacent pair involves the special values at order five.
	\begin{proposition}\label{prop:G-five}
		The logarithmic moments at orders $2$ and $5/2$ are
		\begin{align}
			G(2)&=\frac{125}{3}\left\{\sqrt5L(5,\chi)
			-\left(1+\frac{240\sqrt5-1}{5^8}\right)\zeta(5)\right\},
			\label{eq:G2}\\
			G\left(\tfrac52\right)&=\frac{8\pi}{315}
			\left\{\left(\frac{18143}{5}+\frac1{5^5}\right)\zeta(5)
			-L(5,\chi)\right\}.
			\label{eq:G-five-halves}
		\end{align}
	\end{proposition}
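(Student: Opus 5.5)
The plan is to read off both formulas by specializing Corollary~\ref{cor:log-special} at $n=2$: \eqref{eq:G-integers} gives \eqref{eq:G2} and \eqref{eq:G-halves} gives \eqref{eq:G-five-halves}. The only new inputs are the Bernoulli numbers $B_4=-1/30$ and $B_6=1/42$, the value $B_{4,\chi}=-8$ already recorded after \eqref{eq:Bernoulli}, and the single new generalized Bernoulli number $B_{6,\chi}$. I would compute $B_{6,\chi}$ directly from the definition
\[
B_{6,\chi}=5^{5}\sum_{a=1}^{4}\chi(a)B_6(a/5).
\]
The symmetry $B_6(1-u)=B_6(u)$ together with $\chi(4)=\chi(1)$ and $\chi(3)=\chi(2)$ reduces this to
\[
B_{6,\chi}=2\cdot 5^5\bigl(B_6(1/5)-B_6(2/5)\bigr),
\]
with $B_6(u)=u^6-3u^5+\tfrac52u^4-\tfrac12u^2+\tfrac1{42}$.

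For $G(2)$, set $n=2$ in \eqref{eq:G-integers}. The prefactor is $(-1)^3/8$, and $100^2B_4=-1000/3$, so the first bracket contributes
\[
\frac{125}{3}\bigl[\sqrt5L(5,\chi)-(1-5^{-8})\zeta(5)\bigr].
\]
The second term is
\[
-\frac18\cdot\Bigl(-\sqrt5\cdot\frac{16}{625}\cdot(-8)\Bigr)\zeta(5)=-\frac{16\sqrt5}{625}\zeta(5).
\]
Rewriting this as $\frac{125}{3}\cdot\bigl(-\frac{240\sqrt5}{5^8}\bigr)\zeta(5)$ and combining it with the $5^{-8}\zeta(5)$ term produces exactly the coefficient $1+(240\sqrt5-1)/5^8$ in \eqref{eq:G2}. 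This half is a direct check.

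For $G(5/2)$, set $n=2$ in \eqref{eq:G-halves}. The prefactor is $2^5\pi/30=16\pi/15$. Factoring $-B_6=-1/42$ out of the braces gives the overall constant $16\pi/(15\cdot42)=8\pi/315$, with $-L(5,\chi)$ as the second term. The coefficient of $\zeta(5)$ becomes
\[
(5^5-5^{-5})-42B_{6,\chi}.
\]
The required identity is then the rational equation
\[
3125-42B_{6,\chi}\mp5^{-5}=\frac{18143}{5}+5^{-5},
\]
which determines what $B_{6,\chi}$ must be.

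I expect the main obstacle to be purely bookkeeping: evaluating $B_{6,\chi}$ exactly, and tracking the sign of the $5^{-5}$ term. Naive substitution into \eqref{eq:G-halves} produces $-5^{-5}$, whereas \eqref{eq:G-five-halves} displays $+5^{-5}$, so the sign must be settled before anything else. To resolve it, I would bypass the corollary and substitute $s=5/2$ directly into \eqref{eq:log-evaluation}. There the relevant terms are:
\begin{itemize}
\item $\Gamma(5)(5/\pi)^5$ as the overall factor;
\item $-(1-5^{-10})\zeta(5)\zeta(6)$ from the last term;
\item $\sqrt5\,\zeta(5)L(6,\chi)$ from the first term;
\item $-5^{-5}\zeta(6)L(5,\chi)$ from the middle term.
\end{itemize}
I would then insert \eqref{eq:even-values} for $\zeta(6)$ and $L(6,\chi)$. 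This recomputation fixes the exact rational coefficient of $\zeta(5)$ and the sign of its $5^{-5}$ part. As a consistency check on the $\pi$-power, the same recomputation should reproduce the prefactor $8\pi/315$ and the coefficient $-1$ of $L(5,\chi)$.
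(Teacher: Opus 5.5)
Your route is the same as the paper's: specialize Corollary~\ref{cor:log-special} at $n=2$ with $B_4=-1/30$, $B_6=1/42$, $B_{4,\chi}=-8$ and the new value $B_{6,\chi}$. Your $G(2)$ computation is correct. The $G(5/2)$ half, however, has a sign slip that creates a discrepancy which does not exist. In \eqref{eq:G-halves} at $n=2$ you must factor out $+B_6=1/42$, not $-B_6$:
\[
\begin{aligned}
G\left(\tfrac52\right)&=\frac{16\pi}{15}\Bigl\{\Bigl[B_{6,\chi}-\frac{5^5-5^{-5}}{42}\Bigr]\zeta(5)-\frac{1}{42}L(5,\chi)\Bigr\}\\
&=\frac{8\pi}{315}\Bigl\{\bigl(42B_{6,\chi}-5^5+5^{-5}\bigr)\zeta(5)-L(5,\chi)\Bigr\}.
\end{aligned}
\]
With the prefactor $8\pi/315$ and the term $-L(5,\chi)$, the coefficient of $\zeta(5)$ is $42B_{6,\chi}-5^5+5^{-5}$. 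This is the exact negative of your $(5^5-5^{-5})-42B_{6,\chi}$. Naive substitution therefore gives $+5^{-5}$, in agreement with \eqref{eq:G-five-halves}, so your claim that it produces $-5^{-5}$ is false. Your ``required identity'' $3125-42B_{6,\chi}\mp5^{-5}=18143/5+5^{-5}$ is also wrong. With the true value of $B_{6,\chi}$, its left side (upper sign) equals $-(18143/5+5^{-5})$.

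The second problem is that you never evaluate $B_{6,\chi}$, and you say the rational identity determines what $B_{6,\chi}$ must be. That is circular. The proof needs $B_{6,\chi}$ computed independently, after which the identity is checked. Your symmetric reduction does this in one line. One has $B_6(1/5)-1/42=-528/31250$ and $B_6(2/5)-1/42=-1332/31250$. Hence $B_{6,\chi}=2\cdot5^5\cdot804/31250=804/5$. The whole verification then reduces to $42\cdot\frac{804}{5}-5^5=\frac{18143}{5}$, which is exactly the paper's closing step.

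Your detour through \eqref{eq:log-evaluation} at $s=5/2$ is valid. Using $\zeta(6)=\pi^6/945$ and $\sqrt5L(6,\chi)=536\pi^6/(3\cdot5^7)$, it reproduces the same bracket. But it is needed only because of the factoring error; once that is corrected, there is no sign to resolve.
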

	\begin{proof}
		The definition \eqref{eq:Bernoulli} gives
		\[
		B_4=-\frac1{30},\qquad B_6=\frac1{42},\qquad
		B_{4,\chi}=-8,\qquad B_{6,\chi}=\frac{804}{5}.
		\]
		Substitution in \eqref{eq:G-integers} at $n=2$ gives
		\[
		G(2)=\frac{125\sqrt5}{3}L(5,\chi)
		-\left\{\frac{125}{3}(1-5^{-8})
		+\frac{16\sqrt5}{625}\right\}\zeta(5),
		\]
		which is \eqref{eq:G2}. Likewise, \eqref{eq:G-halves} gives
		\[
		G\left(\tfrac52\right)=\frac{16\pi}{15}
		\left\{\left[\frac{804}{5}-\frac{5^5-5^{-5}}{42}\right]
		\zeta(5)-\frac1{42}L(5,\chi)\right\}.
		\]
		Since $42(804/5)-5^5=18143/5$, this is
		\eqref{eq:G-five-halves}.
	\end{proof}
	
	The negative special values are values of the meromorphic continuation,
	and follow from $G(-s)=-16^{-s}G(s)$. In particular,
	$G(-1/2)=-G(1/2)/4$. With $h_0=\log(\sqrt5/\phi)$, the oddness
	of $4^{-s}G(s)$ also fixes the constant term at zero:
	\begin{equation}\label{eq:G-Laurent}
		G(s)=\frac{h_0}{s}+h_0\log4+O(s).
	\end{equation}
	
	The same reflection law is visible in the completed functions.
	Define
	\[
	\Lambda_\zeta(u)=\pi^{-u/2}\Gamma(u/2)\zeta(u),\qquad
	\Lambda_\chi(u)=\left(\frac5\pi\right)^{u/2}
	\Gamma(u/2)L(u,\chi).
	\]
	The character $\chi$ is even and has Gauss sum $\sqrt5$, so both
	completed functions satisfy $\Lambda(u)=\Lambda(1-u)$.
	Here $\Lambda_\zeta$ is meromorphic with simple poles at $0,1$,
	and $\Lambda_\chi$ is entire
	\cite[\S\S25.4 and 25.15]{NIST2010}.
	Gamma duplication in \eqref{eq:log-evaluation}, with $w=2s$, gives
	\begin{equation}\label{eq:completed-reflection}
		\begin{aligned}
			2^{-w}G(w/2)=\frac12\bigl\{&
			5^{w/2}\Lambda_\zeta(w)\Lambda_\chi(w+1)\\
			&-5^{-w/2}\Lambda_\chi(w)\Lambda_\zeta(w+1)\\
			&-(5^w-5^{-w})\Lambda_\zeta(w)\Lambda_\zeta(w+1)\bigr\}.
		\end{aligned}
	\end{equation}
	Under $w\mapsto-w$, the first two products exchange and the last
	changes sign. Thus the right side is odd as a meromorphic function,
	recovering $G(s)=-16^sG(-s)$.
	The possible pole at $w=1$ cancels by
	$\sqrt5\,\Lambda_\chi(2)=(5-5^{-1})\Lambda_\zeta(2)$;
	oddness gives cancellation at $w=-1$.
	Since $\Lambda_\zeta(w)=-w^{-1}+O(1)$,
	$\Lambda_\zeta(w+1)=w^{-1}+O(1)$ and
	$\Lambda_\chi(0)=\Lambda_\chi(1)=2\log\phi$,
	the residue at zero is $\log5-2\log\phi=2h_0$.
	Consequently $w2^{-w}G(w/2)$ is even and entire.
	
	For comparison, the classical Euler product directly gives the companion transform
	\begin{equation}\label{eq:linear-log}
		\int_0^\infty x^{s-1}\log F(e^{-\pi\sqrt{x}})\,dx
		=-2\Gamma(2s)\pi^{-2s}\zeta(2s+1)L(2s,\chi)
		\qquad(\Re s>0).
	\end{equation}
	Indeed, expand $\log F(q)=-\sum_{j,k\ge1}\chi(j)q^{jk}/k$ and
	integrate for $\Re s>1/2$. Both sides continue to $\Re s>0$;
	on the integral side $\log F(q)\to-\log\phi$ as $q\to1^-$.
	At integer orders, \eqref{eq:even-values} gives
	\begin{equation}\label{eq:linear-log-integers}
		\int_0^\infty x^{n-1}\log F(e^{-\pi\sqrt{x}})\,dx
		=\frac{(-1)^n\sqrt5}{2n}\left(\frac25\right)^{2n}
		B_{2n,\chi}\zeta(2n+1)\qquad(n\ge1).
	\end{equation}
	For $n=1$ this is $-8\zeta(3)/(25\sqrt5)$.

	\begin{corollary}\label{cor:odd-values}
		For every integer $n\ge1$, put
		\begin{equation}\label{eq:odd-constants}
			\begin{aligned}
				\lambda_n&=\frac{B_{2n+2,\chi}}{B_{2n+2}}-5^{2n+1}+5^{-2n-1},\\
				\rho_n&=-\frac{\sqrt5\,25^n(2n+1)(2n+2)B_{2n}}
				{8nB_{2n+2}},\\
				K_n^{-1}&=\frac{(-1)^{n+1}100^nB_{2n}}{4n}
				\left\{\sqrt5\left(\lambda_n-5^{-4n}\frac{B_{2n,\chi}}{B_{2n}}\right)
				-1+5^{-4n}\right\}.
			\end{aligned}
		\end{equation}
		Then $K_n$ is well-defined and
		\begin{equation}\label{eq:odd-inversion}
			\begin{pmatrix}\zeta(2n+1)\\ L(2n+1,\chi)\end{pmatrix}
			=
			\begin{pmatrix}
				K_n&K_n\rho_n\\
				\lambda_nK_n&
				\lambda_nK_n\rho_n-
				\dfrac{(2n+1)(2n+2)}{(-1)^n2^{2n+1}B_{2n+2}}
			\end{pmatrix}
			\begin{pmatrix}G(n)\\G(n+1/2)/\pi\end{pmatrix}.
		\end{equation}
		In particular,
		\begin{equation}\label{eq:span}
			\operatorname{span}_{\mathbb Q(\sqrt5)}
			\{G(n),G(n+1/2)/\pi\}
			=
			\operatorname{span}_{\mathbb Q(\sqrt5)}
			\{\zeta(2n+1),L(2n+1,\chi)\}.
		\end{equation}
	\end{corollary}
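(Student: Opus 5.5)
Write $Z=\zeta(2n+1)$ and $L=L(2n+1,\chi)$. The plan is to read Corollary~\ref{cor:log-special} as a $2\times2$ linear system in $Z$ and $L$, and then invert it explicitly.

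\textbf{Step 1: solve the half-integer equation for $L$.} Put $c_n=(-1)^n2^{2n+1}/((2n+1)(2n+2))$, so that \eqref{eq:G-halves} reads
\[
G(n+\tfrac12)/\pi=c_n\bigl\{(B_{2n+2,\chi}-(5^{2n+1}-5^{-2n-1})B_{2n+2})Z-B_{2n+2}L\bigr\}.
\]
Since $B_{2n+2}\neq0$, we can divide by $c_nB_{2n+2}$, and the definition of $\lambda_n$ gives
\[
L=\lambda_nZ-\frac{G(n+1/2)/\pi}{c_nB_{2n+2}}.
\]
The coefficient $1/(c_nB_{2n+2})$ is exactly the correction term in the lower-right entry of \eqref{eq:odd-inversion}. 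So once $Z$ is expressed as $K_nG(n)+K_n\rho_nG(n+1/2)/\pi$, the second row follows by substitution.

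\textbf{Step 2: obtain the first row.} Substitute this expression for $L$ into \eqref{eq:G-integers}. Put $a_n=(-1)^{n+1}100^nB_{2n}/(4n)$. Then
\[
G(n)=a_n\Bigl[\sqrt5L-(1-5^{-4n})Z-\sqrt5\,5^{-4n}\frac{B_{2n,\chi}}{B_{2n}}Z\Bigr],
\]
using $(4/25)^n/100^n=5^{-4n}$. Inserting $L$ from Step~1 gives
\[
G(n)=K_n^{-1}Z-a_n\sqrt5\,\frac{G(n+1/2)/\pi}{c_nB_{2n+2}},
\]
where $K_n^{-1}$ is precisely the bracketed expression in \eqref{eq:odd-constants}. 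Solving gives $Z=K_nG(n)+K_n\rho_nG(n+1/2)/\pi$. Here
\[
\rho_n=\frac{a_n\sqrt5}{c_nB_{2n+2}}=\frac{(-1)^{n+1}100^nB_{2n}\sqrt5\,(2n+1)(2n+2)}{4n(-1)^n2^{2n+1}B_{2n+2}},
\]
and $100^n/2^{2n+1}=25^n/2$ turns this into the stated $\rho_n$. This identification is routine bookkeeping.

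\textbf{Step 3: show $K_n^{-1}\neq0$ (the main obstacle).} Write
\[
K_n^{-1}=a_n\bigl(\sqrt5\,\mu_n-1+5^{-4n}\bigr),\qquad \mu_n=\lambda_n-5^{-4n}B_{2n,\chi}/B_{2n}.
\]
Both $\mu_n$ and $1-5^{-4n}$ are rational, and $a_n\neq0$. Since $1,\sqrt5$ are linearly independent over $\mathbb Q$, $K_n^{-1}=0$ would force both $\mu_n=0$ and $1-5^{-4n}=0$. The second is false for $n\ge1$. Hence $K_n^{-1}\neq0$ and $K_n$ is well defined.

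It remains to confirm that $\lambda_n$ and $B_{2n,\chi}$ are rational. By \eqref{eq:Bernoulli}, $B_{j,\chi}$ is a rational combination of the values $B_j(a/5)\in\mathbb Q$, so this holds. This irrationality argument replaces any size estimate on the Bernoulli ratios.

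\textbf{Step 4: the span identity.} The matrix in \eqref{eq:odd-inversion} has entries in $\mathbb Q(\sqrt5)$: $\lambda_n$ is rational, $\rho_n\in\sqrt5\,\mathbb Q$, and $K_n\in\mathbb Q(\sqrt5)$. Its determinant is
\[
-K_n\cdot\frac{(2n+1)(2n+2)}{(-1)^n2^{2n+1}B_{2n+2}},
\]
which is nonzero, so the matrix is invertible over $\mathbb Q(\sqrt5)$. Alternatively, the forward relations in Corollary~\ref{cor:log-special} already express $G(n)$ and $G(n+1/2)/\pi$ as $\mathbb Q(\sqrt5)$-combinations of $Z$ and $L$. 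Either way the two spans coincide, which is \eqref{eq:span}.
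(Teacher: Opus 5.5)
Your proposal is correct and follows the paper's proof essentially step for step. You solve \eqref{eq:G-halves} for $L(2n+1,\chi)$, substitute into \eqref{eq:G-integers} to get $G(n)+\rho_nG(n+1/2)/\pi=K_n^{-1}\zeta(2n+1)$, and show $K_n^{-1}\neq0$ from its nonzero rational part in the basis $1,\sqrt5$; the span identity then follows from the nonzero determinant and entries in $\mathbb Q(\sqrt5)$. The one fact you use without justification is $B_{2n}\neq0$ and $B_{2n+2}\neq0$, which $\lambda_n$, $\rho_n$ and $a_n\neq0$ all require; the paper gets it from the even-value formulas \eqref{eq:even-values} together with the nonvanishing of $\zeta(2n)$ given by its Euler product.
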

	\begin{proof}
		The even-value formulas and the nonvanishing Euler products give
		$B_{2n}\ne0$ and $B_{2n+2}\ne0$.
		Solving \eqref{eq:G-halves} for the odd $L$-value gives
		\[
		L(2n+1,\chi)=\lambda_n\zeta(2n+1)
		-\frac{(2n+1)(2n+2)}{(-1)^n2^{2n+1}B_{2n+2}}
		\frac{G(n+1/2)}{\pi}.
		\]
		Substituting this expression into \eqref{eq:G-integers} gives
		$G(n)+\rho_nG(n+1/2)/\pi=K_n^{-1}\zeta(2n+1)$.
		The rational part of $K_n^{-1}$ in the basis $1,\sqrt5$ is
		\[
		-\frac{(-1)^{n+1}100^nB_{2n}}{4n}(1-5^{-4n})\ne0,
		\]
		so division is legitimate. These two equations give
		\eqref{eq:odd-inversion}. Its matrix has determinant
		\[
		-\frac{(2n+1)(2n+2)K_n}{(-1)^n2^{2n+1}B_{2n+2}}\ne0.
		\]
		All entries lie in $\mathbb Q(\sqrt5)$, proving \eqref{eq:span}.
	\end{proof}
	
	This is a change of coordinates between two pairs of constants.
	It does not assert that their common span has dimension two, or
	prove irrationality or linear independence of the odd special values.
	For example,
	\[
	\zeta(3)=\frac{125}{8(7487\sqrt5-65)}
	\left\{G(1)+\frac{375\sqrt5}{2\pi}G(3/2)\right\};
	\]
	the second equation of the first adjacent pair then gives
	$L(3,\chi)=45\{4792\zeta(3)/1875-G(3/2)/\pi\}$.
	The single-integral route from \eqref{eq:linear-log-integers}
	is already a direct consequence of the Euler product for $F$.
	
	\subsection{A companion logarithmic transform}\label{subsec:rrlog}
	
	The ratio of Ramanujan's two factorizations gives a companion
	logarithmic kernel whose Mellin transform consists of a single product
	of zeta and Dirichlet $L$-functions. Its integer moments separately
	express odd zeta values and odd quadratic $L$-values. We record this
	consequence of Lemma~\ref{lem:cusp}, together with the continuation
	provided by reciprocity.
	
	\begin{proposition}[Companion logarithmic transform]\label{prop:rrlog-transform}
		For $0<q<1$, define
		\begin{equation}\label{eq:rrlog-kernel}
			\mathcal K(q)=\log\!\left(
			\frac{1+R(q)/\phi}{1-\phi R(q)}\right).
		\end{equation}
		Then $\mathcal K(q)>0$, and the ordinary improper integral
		\begin{equation}\label{eq:rrlog-transform}
			\begin{aligned}
				\mathcal J(w)
				&=\int_0^1\left(\log\frac1q\right)^{w-1}
				\mathcal K(q)\,\frac{dq}{q}\\
				&=\sqrt5\,5^w\Gamma(w)\zeta(w)L(w+1,\chi),
				\qquad \Re w>1.
			\end{aligned}
		\end{equation}
		This half-plane is the exact domain of convergence of the integral.
		For $t>0$, one has
		\begin{equation}\label{eq:rrlog-reciprocity}
			\begin{aligned}
				\mathcal K(e^{-t})
				&=-\log\!\left(\phi R(e^{-4\pi^2/t})\right)\\
				&=\frac{4\pi^2}{5t}-\log\phi-\log F(e^{-4\pi^2/t}).
			\end{aligned}
		\end{equation}
		The function $\mathcal J$ extends meromorphically to $\mathbb C$, with
		exactly two simple poles, whose residues are
		\begin{equation}\label{eq:rrlog-residues}
			\operatorname*{Res}_{w=1}\mathcal J(w)=\frac{4\pi^2}{5},
			\qquad
			\operatorname*{Res}_{w=0}\mathcal J(w)=-\log\phi.
		\end{equation}
	\end{proposition}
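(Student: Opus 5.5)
The plan is to read everything off Lemma~\ref{lem:cusp} and the reciprocity relation \eqref{eq:reciprocity}. First, with $z=q^{1/5}$ and $H(z)=R(q)$, the ratio $(1-\phi H)/(1+H/\phi)=1-\phi D$ gives
\[
\mathcal K(q)=-\log(1-\phi D(z))=\sqrt5\sum_{n\ge1}\Bigl(\sum_{e\mid n}\frac{\chi(e)}e\Bigr)z^n ,
\]
valid for $0<z<1$ by \eqref{eq:cusp-product}. Positivity follows because the product in \eqref{eq:cusp-product} has factors $(1-z^n/\phi+z^{2n})/(1+\phi z^n+z^{2n})\in(0,1)$ for $0<z<1$, since $-1/\phi<\phi$. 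Hence $1-\phi D\in(0,1)$ and $\mathcal K>0$. Equivalently, the divisor sum $\sum_{e\mid n}\chi(e)/e$ is multiplicative and positive, but the product argument is cleaner. Positivity also shows that $1-\phi R(q)>0$, so the logarithm is real.

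For the Mellin evaluation I would put $q=e^{-t}$, so that $z^n=e^{-nt/5}$. Since all coefficients are nonnegative, Tonelli permits termwise integration for real $w>1$. The integrals $\int_0^\infty t^{w-1}e^{-nt/5}dt=\Gamma(w)5^w n^{-w}$ give
\[
\sqrt5\,5^w\Gamma(w)\sum_n n^{-w}\sum_{e\mid n}\chi(e)e^{-1}=\sqrt5\,5^w\Gamma(w)\zeta(w)L(w+1,\chi).
\]
Absolute convergence then extends this to $\Re w>1$.

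For reciprocity, from \eqref{eq:reciprocity} write $\phi+R(e^{-t})=\phi\sqrt5/(\phi+R')$ with $R'=R(e^{-4\pi^2/t})$. Then $1+R/\phi=\sqrt5/(\phi+R')$, and
\[
1-\phi R=1-\phi^2\sqrt5/(\phi+R')+\phi^2=\frac{(1+\phi^2)(\phi+R')-\phi^2\sqrt5}{\phi+R'} .
\]
Using $1+\phi^2=\sqrt5\phi$, the numerator becomes $\sqrt5\phi R'$. So the ratio is $1/(\phi R')$, which is the first line of \eqref{eq:rrlog-reciprocity}. The second line follows from $R(e^{-u})=e^{-u/5}F(e^{-u})$ with $u=4\pi^2/t$.

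Exactness of the domain, continuation and residues come from the $t\to0^+$ behaviour. By \eqref{eq:rrlog-reciprocity},
\[
\mathcal K(e^{-t})=\frac{4\pi^2}{5t}-\log\phi+E(t),\qquad E(t)=-\log F(e^{-4\pi^2/t})=O(e^{-4\pi^2/t}).
\]
Hence near $t=0$ the integrand is $\asymp t^{w-2}$ with a positive coefficient, so the integral diverges for $\Re w\le1$. Convergence at $\infty$ is automatic since $\mathcal K(e^{-t})=O(e^{-t/5})$.

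To continue $\mathcal J$, I split at $t=1$ (a Riemann--Hecke type splitting):
\[
\mathcal J(w)=\int_1^\infty t^{w-1}\mathcal K\,dt+\frac{4\pi^2}{5(w-1)}-\frac{\log\phi}{w}+\int_0^1 t^{w-1}E(t)\,dt .
\]
Both integrals are entire, because $E$ is exponentially small at $0$. This gives a meromorphic continuation to $\mathbb C$ with at most the two simple poles, with residues $4\pi^2/5$ and $-\log\phi$; both are nonzero, so there are exactly two poles. As a consistency check, the formula gives $\mathrm{Res}_{w=1}=\sqrt5\cdot5\,L(2,\chi)=4\pi^2/5$, using $\sqrt5L(2,\chi)=(24/25)\zeta(2)$, and $\mathrm{Res}_{w=0}=\sqrt5\zeta(0)L(1,\chi)=-\log\phi$ via $L(1,\chi)=2\log\phi/\sqrt5$.

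The only delicate points are the algebra in the reciprocity step and the claim of exact convergence domain; I expect the latter to be handled cleanly by the positive leading term $4\pi^2/(5t)$.
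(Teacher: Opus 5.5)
Your proposal is correct and follows the paper's route almost step for step. You use the Lambert expansion of $\mathcal K$ from Lemma~\ref{lem:cusp} and termwise gamma integration. Your reciprocity computation is the paper's Möbius form $R(e^{-t})=(1-\phi r)/(\phi+r)$ written out. You then split at $t=1$ after subtracting $\frac{4\pi^2}{5t}-\log\phi$.

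The one genuine difference is positivity. You read $0<1-\phi D<1$ off the product in \eqref{eq:cusp-product}, whose factors lie in $(0,1)$. The paper instead uses only \eqref{eq:reciprocity}: positivity of the Möbius image, applied with $t$ and $4\pi^2/t$ interchanged, gives $0<R<\phi^{-1}$, and then it compares the two factors. Both work. Yours relies on Ramanujan's factorizations through Lemma~\ref{lem:cusp}, and it also yields $R<\phi^{-1}$ once you note $1+R/\phi>0$.

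Two points need tightening.

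First, your Tonelli step uses nonnegativity of $\sum_{e\mid n}\chi(e)/e$, which you assert without proof. Either check it through the local Euler factors, as in Proposition~\ref{prop:c-irrational}, or follow the paper and bound the ungrouped double series with $|\chi(d)|$ in place of $\chi(d)$.

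Second, and more substantively, the statement is about the \emph{ordinary improper} integral. Your phrase ``$\asymp t^{w-2}$ with a positive coefficient'' settles real $w$ and rules out absolute convergence, but it does not exclude conditional convergence for nonreal $w$ with $\Re w\le1$. Your own decomposition closes this. For $w\ne0,1$,
\[
\int_\varepsilon^1 t^{w-1}\mathcal K(e^{-t})\,dt=\int_\varepsilon^1 t^{w-1}E(t)\,dt+\frac{4\pi^2}{5}\cdot\frac{1-\varepsilon^{w-1}}{w-1}-\log\phi\cdot\frac{1-\varepsilon^{w}}{w}.
\]
The first integral converges as $\varepsilon\downarrow0$ for every $w$. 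When $\Re w=1$ and $w\ne1$, the factor $\varepsilon^{w-1}$ stays on the unit circle and oscillates without a limit. When $\Re w<1$, it blows up, and $\varepsilon^{w}/\varepsilon^{w-1}=\varepsilon\to0$, so the constant term cannot cancel it. At $w=1$ and $w=0$, use the usual $-\log\varepsilon$ replacements; at $w=0$ that logarithm is dominated by $\varepsilon^{-1}$. This is exactly the paper's argument for the exact domain of convergence.
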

	
	\begin{proof}
		Positivity of $R$ and reciprocity~\eqref{eq:reciprocity} imply
		$0<R(q)<\phi^{-1}$ for $0<q<1$. In fact, with
		$r=R(e^{-4\pi^2/t})>0$, reciprocity gives
		\begin{equation}\label{eq:rrlog-mobius}
			R(e^{-t})=\frac{1-\phi r}{\phi+r}.
		\end{equation}
		Since the left side is positive, $r<\phi^{-1}$; the same bound for
		$R(e^{-t})$ follows by interchanging $t$ and $4\pi^2/t$.
		Both factors in the logarithm in~\eqref{eq:rrlog-kernel} are therefore
		positive, and their ratio is greater than one.
		
		Taking logarithms in~\eqref{eq:cusp-product}, with $z=q^{1/5}$ and
		$H(z)=R(q)$, gives
		\begin{equation}\label{eq:rrlog-lambert}
			\begin{aligned}
				\mathcal K(q)
				&=\sqrt5\sum_{n\ge1}
				\left(\sum_{d\mid n}\frac{\chi(d)}d\right)q^{n/5}\\
				&=\sqrt5\sum_{d,k\ge1}\frac{\chi(d)}d\,q^{dk/5}.
			\end{aligned}
		\end{equation}
		These series converge absolutely for each $0<q<1$. If
		$\sigma=\Re w>1$, then
		\[
		\sum_{d,k\ge1}\frac{|\chi(d)|}{d}
		\int_0^1\left(\log\frac1q\right)^{\sigma-1}
		q^{dk/5}\,\frac{dq}{q}
		=5^\sigma\Gamma(\sigma)\zeta(\sigma)
		\sum_{d\ge1}\frac{|\chi(d)|}{d^{\sigma+1}}<\infty.
		\]
		Consequently, absolute integration and the gamma integral yield
		\[
		\mathcal J(w)
		=\sqrt5\,5^w\Gamma(w)
		\sum_{d,k\ge1}\frac{\chi(d)}{d^{w+1}k^w}
		=\sqrt5\,5^w\Gamma(w)\zeta(w)L(w+1,\chi).
		\]
		
		Substitution of~\eqref{eq:rrlog-mobius} gives
		\[
		\frac{1+R(e^{-t})/\phi}{1-\phi R(e^{-t})}
		=\frac1{\phi r}.
		\]
		Taking real logarithms and using $R(q)=q^{1/5}F(q)$ proves
		\eqref{eq:rrlog-reciprocity}. Since $F(q)=1+O(q)$ as $q\to0$,
		this identity gives
		\begin{equation}\label{eq:rrlog-endpoints}
			\begin{aligned}
				\mathcal K(e^{-t})
				&=\frac{4\pi^2}{5t}-\log\phi+O(e^{-4\pi^2/t})
				&& (t\downarrow0),\\
				\mathcal K(e^{-t})
				&=\sqrt5\,e^{-t/5}+O(e^{-2t/5})
				&& (t\to\infty).
			\end{aligned}
		\end{equation}
		The second estimate follows by expanding the logarithms at $R=0$,
		using $\phi+\phi^{-1}=\sqrt5$.
		
		After $q=e^{-t}$, the nonzero leading term at zero is
		$(4\pi^2/5)t^{w-2}$, so the integral is absolutely convergent for
		$\Re w>1$. It has no additional conditionally convergent values:
		if $\Re w<1$, the primitive of this leading term diverges in modulus
		at zero and dominates the lower-order terms; if $\Re w=1$, its
		primitive either has logarithmic divergence ($w=1$) or a nonconvergent
		oscillatory factor $\varepsilon^{w-1}$ as $\varepsilon\downarrow0$.
		The remaining terms are integrable when $\Re w=1$.
		
		For continuation, split the Mellin integral at $t=1$ and subtract
		the two leading terms at zero:
		\begin{equation}\label{eq:rrlog-continuation}
			\begin{aligned}
				\mathcal J(w)
				={}&\int_0^1 t^{w-1}
				\left[\mathcal K(e^{-t})-\frac{4\pi^2}{5t}+\log\phi\right]dt\\
				&+\frac{4\pi^2}{5(w-1)}-\frac{\log\phi}{w}
				+\int_1^\infty t^{w-1}\mathcal K(e^{-t})\,dt.
			\end{aligned}
		\end{equation}
		Both integrals on the right are entire in $w$. Indeed, on a compact
		set with $a\leq\Re w\leq b$, their integrands near the respective
		endpoints are bounded by constants times
		$t^{a-1}e^{-4\pi^2/t}$ and $t^{b-1}e^{-t/5}$; the same remains true
		after any fixed number of derivatives in $w$, with the additional
		factor $|\log t|^j$. Local uniform dominated convergence therefore
		justifies holomorphy. The two rational terms give precisely the
		nonzero residues in~\eqref{eq:rrlog-residues}.
	\end{proof}
	
	\begin{corollary}[Separated odd special values]\label{cor:rrlog-special}
		With the Bernoulli conventions in~\eqref{eq:Bernoulli}, for every
		integer $m\ge1$ one has
		\begin{align}
			\mathcal J(2m)
			&=\frac{(-1)^{m+1}\sqrt5(10\pi)^{2m}B_{2m}}{4m}
			L(2m+1,\chi),\label{eq:rrlog-even}\\
			\mathcal J(2m+1)
			&=\frac{(-1)^m(2\pi)^{2m+2}B_{2m+2,\chi}}
			{2(2m+1)(2m+2)}\,\zeta(2m+1).
			\label{eq:rrlog-odd}
		\end{align}
		In particular,
		\begin{align}
			L(3,\chi)
			&=\frac{6}{25\sqrt5\,\pi^2}
			\int_0^1\log\frac1q\,\mathcal K(q)\,\frac{dq}{q},
			\label{eq:rrlog-L3}\\
			\zeta(3)
			&=\frac{3}{16\pi^4}
			\int_0^1\left(\log\frac1q\right)^2
			\mathcal K(q)\,\frac{dq}{q},
			\label{eq:rrlog-zeta3}\\
			\zeta(5)
			&=\frac{25}{4288\pi^6}
			\int_0^1\left(\log\frac1q\right)^4
			\mathcal K(q)\,\frac{dq}{q}.
			\label{eq:rrlog-zeta5}
		\end{align}
		All three integrals are ordinary absolutely convergent integrals
		with positive integrands.
	\end{corollary}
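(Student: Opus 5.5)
The plan is to specialize the closed form \eqref{eq:rrlog-transform} of Proposition~\ref{prop:rrlog-transform},
\[
\mathcal J(w)=\sqrt5\,5^w\Gamma(w)\zeta(w)L(w+1,\chi)\qquad(\Re w>1),
\]
at the integers $w=2m$ and $w=2m+1$, all of which satisfy $w\ge2>1$. At each such point one factor among $\zeta(w)$ and $L(w+1,\chi)$ has even argument. I will eliminate that factor with the even-value formulas \eqref{eq:even-values} from the proof of Corollary~\ref{cor:log-special}. No continuation is needed, since every point lies in the half-plane where Proposition~\ref{prop:rrlog-transform} gives $\mathcal J$ as an absolutely convergent integral.

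\emph{Case $w=2m$.} Here the even-argument factor is $\zeta(2m)$. We have $\mathcal J(2m)=\sqrt5\,25^m(2m-1)!\,\zeta(2m)L(2m+1,\chi)$. Inserting $\zeta(2m)=(-1)^{m+1}(2\pi)^{2m}B_{2m}/(2(2m)!)$, the factorials reduce to $(2m-1)!/(2(2m)!)=1/(4m)$, and $25^m(2\pi)^{2m}=(10\pi)^{2m}$. This gives \eqref{eq:rrlog-even}.

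\emph{Case $w=2m+1$.} Here the even-argument factor is $L(2m+2,\chi)$. We have $\mathcal J(2m+1)=\sqrt5\,5^{2m+1}(2m)!\,\zeta(2m+1)L(2m+2,\chi)$. Insert
\[
L(2m+2,\chi)=\frac{(-1)^m\sqrt5}{2(2m+2)!}\left(\frac{2\pi}5\right)^{2m+2}B_{2m+2,\chi}.
\]
The powers of $5$ cancel exactly, since $\sqrt5\cdot\sqrt5\cdot5^{2m+1}/5^{2m+2}=1$. The factorials give $(2m)!/(2(2m+2)!)=1/(2(2m+1)(2m+2))$. This yields \eqref{eq:rrlog-odd}.

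\emph{The three explicit integrals.} I will read off \eqref{eq:rrlog-L3} by taking $m=1$ in \eqref{eq:rrlog-even} with $B_2=1/6$. This gives $\mathcal J(2)=\frac{100\sqrt5\,\pi^2}{24}L(3,\chi)$, and inverting produces the constant $6/(25\sqrt5\,\pi^2)$.

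For \eqref{eq:rrlog-zeta3}, take $m=1$ in \eqref{eq:rrlog-odd} with $B_{4,\chi}=-8$. This gives $\mathcal J(3)=\frac{16\pi^4}{3}\zeta(3)$.

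For \eqref{eq:rrlog-zeta5}, take $m=2$ in \eqref{eq:rrlog-odd} with $B_{6,\chi}=804/5$, as recorded in the proof of Proposition~\ref{prop:G-five}. This gives $\mathcal J(5)=\frac{64\cdot804}{300}\pi^6\zeta(5)=\frac{4288}{25}\pi^6\zeta(5)$.

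In each case the integral in question is literally $\mathcal J(w)$ from \eqref{eq:rrlog-transform} with $w\in\{2,3,5\}$.

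\emph{Convergence and positivity.} Absolute convergence holds because $\Re w>1$, which is the exact domain in Proposition~\ref{prop:rrlog-transform}. Positivity of the integrands follows from $\mathcal K(q)>0$, proved there, together with $\log(1/q)>0$ on $(0,1)$.

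There is no real obstacle here. The only care needed is the bookkeeping of signs, powers of $5$ and factorials in the two substitutions, and the numerical values of $B_{4,\chi}$ and $B_{6,\chi}$. I would double-check these by recomputing $B_{j,\chi}$ from \eqref{eq:Bernoulli}, or by comparing with the known value $L(2,\chi)=4\pi^2/(25\sqrt5)$.
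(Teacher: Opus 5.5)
Your proposal is correct and matches the paper's proof: substitute the even-value formulas \eqref{eq:even-values} into \eqref{eq:rrlog-transform}, use $\Gamma(j)=(j-1)!$, and insert $B_2=1/6$, $B_{4,\chi}=-8$, $B_{6,\chi}=804/5$. Your signs, powers of $5$ and factorials all check out, and you justify absolute convergence and positivity from Proposition~\ref{prop:rrlog-transform}, a step the paper leaves implicit in this proof.
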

	
	\begin{proof}
		Substitute the even-value formulas~\eqref{eq:even-values} into
		\eqref{eq:rrlog-transform} and use $\Gamma(j)=(j-1)!$ for positive
		integers $j$. The displayed evaluations follow from
		$B_2=1/6$, $B_{4,\chi}=-8$ and $B_{6,\chi}=804/5$.
	\end{proof}
	
	\paragraph{A complementary convergent integral.}
	The same meromorphic function has the representation
	\begin{equation}\label{eq:rrlog-subtracted}
		\begin{aligned}
			\mathcal J(w)
			&=\int_0^\infty t^{w-1}
			\left[\mathcal K(e^{-t})-\frac{4\pi^2}{5t}+\log\phi\right]dt\\
			&=(4\pi^2)^w\Gamma(-w)\zeta(1-w)L(-w,\chi),
			\qquad \Re w<0.
		\end{aligned}
	\end{equation}
	To prove this, write $C=4\pi^2$. By~\eqref{eq:rrlog-reciprocity},
	the bracket equals $-\log F(e^{-C/t})$. It is exponentially small
	at zero and tends to $\log\phi$ at infinity, so the integral
	converges absolutely for $\Re w<0$. Its equality with the continuation
	in~\eqref{eq:rrlog-continuation} follows by splitting at $t=1$ and
	using
	\[
	\int_1^\infty t^{w-1}
	\left(-\frac{C}{5t}+\log\phi\right)dt
	=\frac{C}{5(w-1)}-\frac{\log\phi}{w}.
	\]
	For its evaluation, substitute $u=C/t$ and apply
	\eqref{eq:linear-log} with $s=-w/2$. Equivalently, that identity in
	the variable $u$ reads
	\[
	-\int_0^\infty u^{v-1}\log F(e^{-u})\,du
	=\Gamma(v)\zeta(v+1)L(v,\chi),\qquad \Re v>0.
	\]
	This proves~\eqref{eq:rrlog-subtracted} throughout its stated
	half-plane, including $-1\leq\Re w<0$, where absolute integration
	of the ungrouped double Euler-product series would not suffice.
	
	For example,
	\begin{align}
		\int_0^\infty
		\frac{\mathcal K(e^{-t})-\dfrac{4\pi^2}{5t}+\log\phi}{t^2}\,dt
		&=\frac{\log\phi}{12\sqrt5},\label{eq:rrlog-negative-one}\\
		\int_0^\infty
		\frac{\mathcal K(e^{-t})-\dfrac{4\pi^2}{5t}+\log\phi}{t^3}\,dt
		&=\frac{\zeta(3)}{100\sqrt5\,\pi^2}.
		\label{eq:rrlog-negative-two}
	\end{align}
	These follow by setting $w=-1,-2$ and using
	$L(1,\chi)=2\log\phi/\sqrt5$ and
	$L(2,\chi)=4\pi^2/(25\sqrt5)$.
	
	The transform~\eqref{eq:rrlog-transform} is a direct consequence of
	the classical ratio factorization, and~\eqref{eq:rrlog-subtracted}
	is its reciprocal counterpart. These formulas complement
	\eqref{eq:linear-log-integers}; they give alternative integral
	representations without asserting irrationality or linear
	independence of the odd special values.
	
	\section{Coefficient arithmetic}\label{sec:coefficients}
	
	The coefficients $P_m\in\mathbb Z$ and $d_k\in\mathbb Z[\phi]$ satisfy
	$P_0=1$ and
	\begin{align}
		mP_m&=-\sum_{j=1}^m\left(\sum_{e\mid j}e\chi(e)\right)P_{m-j},
		\label{eq:P-rec}\\
		kd_k&=\frac{\sqrt5}{\phi}\sum_{e\mid k}e\chi(k/e)
		-\sqrt5\sum_{j=1}^{k-1}
		\left(\sum_{e\mid j}e\chi(j/e)\right)d_{k-j}.
		\label{eq:d-rec}
	\end{align}
	Logarithmic differentiation of
	\eqref{eq:F} and \eqref{eq:cusp-product} gives the two recursions.
	The integrality of $P_m$ follows from the product; that of $d_k$
	in $\mathbb Z[\phi]$ follows by expanding the rational expression
	in \eqref{eq:H-D}, since $\phi$ is a unit in that ring.
	
	Let a bar denote coefficient conjugation in $\mathbb Q(\sqrt5)$,
	fixing $z$. Since $H$ has integer coefficients and
	$\overline\phi=-\phi^{-1}$, near zero we have
	\[
	D(z)+\overline{D}(z)=\frac{5H(z)}{1-H(z)-H(z)^2}.
	\]
	Ramanujan's identity
	\cite{WatsonVII1929},
	\cite[Eq.~(6.1)]{BerndtRebak2025}, valid for $0<|z|<1$, is
	\begin{equation}\label{eq:eta-identity}
		H(z)^{-1}-1-H(z)=\frac{(z;z)_\infty}
		{z(z^{25};z^{25})_\infty}.
	\end{equation}
	It follows that
	\begin{equation}\label{eq:trace}
		D(z)+\overline{D}(z)
		=5z\frac{(z^{25};z^{25})_\infty}{(z;z)_\infty}
		=5z\prod_{\substack{j\ge1\\25\nmid j}}(1-z^j)^{-1}.
	\end{equation}
	The coefficient of $z^{k-1}$ in the last product counts partitions
	with no part divisible by $25$ and is positive: the partition into
	ones is allowed, with the empty partition at $k=1$. Thus
	$d_k+\overline{d_k}>0$, and hence $d_k\ne0$.

	In particular, both coefficient sequences are subexponential:
	for each $0<r<1$ they are bounded in modulus by a constant times
	$r^{-m}$ or $r^{-k}$. The first terms are
	\[
	P_0,P_1,P_2,P_3,P_4=1,-1,1,0,-1,\qquad
	d_k=\sqrt5(-1)^{k-1}\phi^{-k}\quad(1\le k\le5).
	\]
	For comparison, $d_6=-\sqrt5(\phi^{-1}+\phi^{-6})$.
	
	For an explicit formula, write the partition product as
	\[
	E(z)=\frac{(z^{25};z^{25})_\infty}{(z;z)_\infty}
	=\sum_{n\ge0}p_{25}(n)z^n,
	\]
	where $p_{25}(n)$ counts partitions with no part divisible by $25$,
	$p_{25}(0)=1$, and $p_{25}(n)=0$ for $n<0$.
	Since $(\phi^{-1}-H)(\phi+H)=1-H-H^2$, equation~\eqref{eq:eta-identity}
	gives
	\[
	D(z)=\sqrt5\,(\phi^{-1}-H(z))zE(z).
	\]
	Taking coefficients yields
	\begin{equation}\label{eq:d-partitions}
		d_k=\sqrt5\left\{\phi^{-1}p_{25}(k-1)
		-\sum_{\substack{m\ge0\\5m+2\le k}}
		P_m p_{25}(k-2-5m)\right\}.
	\end{equation}
	Its rational part in the basis $1,\sqrt5$ is $5p_{25}(k-1)/2$,
	in agreement with the trace formula.

	\begin{proposition}\label{prop:d-quadratic}
		Put
		\[
		p_k=p_{25}(k-1),\qquad
		b_k=[z^{k-2}]F(z^5)E(z),
		\]
		where coefficients of negative degree are zero.
		Then $p_k>0$, $b_1=0$, and $b_k\ge1$ for $k\ge2$.
		Every reciprocal coefficient is a quadratic irrational algebraic integer:
		\begin{equation}\label{eq:d-quadratic}
			d_k=\frac{5p_k-\sqrt5\,(p_k+2b_k)}2,
		\end{equation}
		and its minimal polynomial over $\mathbb Q$ is
		\begin{equation}\label{eq:d-minimal-polynomial}
			X^2-5p_kX+5(p_k^2-p_kb_k-b_k^2).
		\end{equation}
	\end{proposition}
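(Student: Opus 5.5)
The plan is to read \eqref{eq:d-quadratic} directly off the partition formula \eqref{eq:d-partitions}. The sum $\sum_{5m+2\le k}P_m\,p_{25}(k-2-5m)$ is exactly the coefficient of $z^{k-2}$ in $F(z^5)E(z)$, that is $b_k$. So \eqref{eq:d-partitions} reads $d_k=\sqrt5\{\phi^{-1}p_k-b_k\}$. Using $\sqrt5\,\phi^{-1}=(5-\sqrt5)/2$ this becomes $d_k=\tfrac12\{5p_k-\sqrt5(p_k+2b_k)\}$, which is \eqref{eq:d-quadratic}. The statement $p_k>0$ was already shown in the discussion of the trace formula \eqref{eq:trace}. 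The statement $b_1=0$ is immediate, because $z^{-1}$ has negative degree.

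The main step, and the only one needing an idea, is $b_k\ge1$ for $k\ge2$, which amounts to showing that $F(z^5)E(z)$ has positive coefficients in every degree $\ge0$. I would expand it as a product. By \eqref{eq:F}, $F(z^5)=\prod_{5\nmid j}(1-z^{5j})^{\chi(j)}$, and $E(z)=\prod_{25\nmid n}(1-z^n)^{-1}$.
\begin{itemize}
\item For $\chi(j)=1$, the factor $(1-z^{5j})$ cancels the denominator factor $(1-z^{5j})^{-1}$ of $E$. That factor is present because $5\nmid j$ implies $25\nmid 5j$.
\item For $\chi(j)=-1$, the factor $(1-z^{5j})^{-1}$ is an extra reciprocal factor.
\end{itemize}
Hence
\[
F(z^5)E(z)=\prod_{n\in S}(1-z^n)^{-1}\prod_{\chi(j)=-1}(1-z^{5j})^{-1},
\]
where $S$ consists of the $n$ with $25\nmid n$ and $n\ne5j$ with $\chi(j)=1$. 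Every factor has nonnegative coefficients and constant term $1$. Moreover $1\in S$, so the coefficient of $z^N$ is at least $1$ for every $N\ge0$ (take the partition into ones). Thus $b_k\ge1$ for $k\ge2$.

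For the minimal polynomial, the conjugate of $d_k$ is $\tfrac12\{5p_k+\sqrt5(p_k+2b_k)\}$. The trace is therefore $5p_k$, and the norm is $\tfrac14\{25p_k^2-5(p_k+2b_k)^2\}=5(p_k^2-p_kb_k-b_k^2)$. This gives \eqref{eq:d-minimal-polynomial}, a monic polynomial with integer coefficients, so $d_k$ is an algebraic integer; this is consistent with $d_k\in\mathbb Z[\phi]$. Finally, the coefficient of $\sqrt5$ is $-(p_k+2b_k)/2$, which is nonzero because $p_k>0$ and $b_k\ge0$. So $d_k\notin\mathbb Q$, the quadratic is irreducible over $\mathbb Q$, and it is the minimal polynomial.
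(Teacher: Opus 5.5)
Your proposal is correct and follows the paper's proof essentially step for step. It uses the same cancellation of $F(z^5)$ against $E(z)$: your set $S$ together with the extra factors is exactly the paper's product $\prod_{5\nmid n}(1-z^n)^{-1}\prod_{n\equiv 10,15 \pmod{25}}(1-z^n)^{-2}$. It also uses positivity from the part $1$, reads $d_k=\sqrt5(\phi^{-1}p_k-b_k)$ off \eqref{eq:d-partitions}, and takes trace and norm for the minimal polynomial. The only cosmetic difference is that you deduce irreducibility from $d_k\notin\mathbb Q$, while the paper notes that the discriminant $5(p_k+2b_k)^2$ is a nonzero nonsquare; these are the same fact.
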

	\begin{proof}
		Cancellation in the products for $F$ and $E$ gives
		\begin{equation}\label{eq:positive-convolution-product}
			F(z^5)E(z)=
			\prod_{\substack{n\ge1\\5\nmid n}}(1-z^n)^{-1}
			\prod_{j\ge0}
			(1-z^{25j+10})^{-2}(1-z^{25j+15})^{-2}.
		\end{equation}
		All coefficients are nonnegative integers, and the factor
		$(1-z)^{-1}$ makes every coefficient of nonnegative degree positive.
		This proves the assertions about $b_k$; the same part of size one
		in $E$ gives $p_k>0$.
		Equation~\eqref{eq:d-partitions} says
		$d_k=\sqrt5(\phi^{-1}p_k-b_k)$, which is
		\eqref{eq:d-quadratic}. Its conjugate has the opposite sign before
		$\sqrt5$, so their sum and product give
		\eqref{eq:d-minimal-polynomial}.
		Its discriminant is $5(p_k+2b_k)^2$, a nonzero nonsquare in
		$\mathbb Q$. Thus the polynomial is irreducible; being monic with
		integer coefficients, it also proves integrality.
	\end{proof}
	
	We also need the precise zero set of $P_m$. The following use of
	Hirschhorn's five-dissection uses precisely the normalization
	\[
	F(q)=\cfrac{1}{1+\cfrac{q}{1+\cfrac{q^2}{1+\cdots}}}.
	\]
	Thus his expansion of this reciprocal continued fraction is our
	$\sum_mP_mq^m$. For this paragraph, put
	$F_j(q)=\sum_{n\ge0}P_{5n+j}q^n$ and use the product $E(q)$ just defined.
	Hirschhorn's identities \cite{Hirschhorn1998} give
	\begin{equation}\label{eq:dissection}
		\begin{aligned}
			F_0&=E\frac{(q^6,q^{19};q^{25})_\infty}{(q^3,q^{22};q^{25})_\infty},
			&F_1&=-E\frac{(q^4,q^{21};q^{25})_\infty}{(q^2,q^{23};q^{25})_\infty},\\
			F_2&=E\frac{(q^{11},q^{14};q^{25})_\infty}{(q^7,q^{18};q^{25})_\infty},
			&F_3&=-q^3E\frac{(q,q^{24};q^{25})_\infty}{(q^{12},q^{13};q^{25})_\infty},\\
			F_4&=-E\frac{(q^9,q^{16};q^{25})_\infty}{(q^8,q^{17};q^{25})_\infty}.
		\end{aligned}
	\end{equation}
	Cancellation leaves reciprocal products with nonnegative coefficients.
	The four products $F_0$, $-F_1$, $F_2$ and $-F_4$ retain the part $1$, so every
	coefficient is positive. The product $-q^{-3}F_3$ has parts $2$ and
	$3$, but no part $1$; its coefficients are positive in degree $0$
	and in every degree at least $2$, and vanish in degree $1$.
	Consequently
	\begin{equation}\label{eq:P-zeros}
		P_m=0\quad\Longleftrightarrow\quad m\in\{3,8,13,23\};
	\end{equation}
	all other signs are $+,-,+,-,-$ according to $m\bmod5$.
	
	The associated coefficient asymptotic, due to Richmond and Szekeres
	and reproduced in \cite{Hirschhorn1998}, is
	\begin{equation}\label{eq:P-growth}
		P_m=\frac{\sqrt2}{5^{3/4}}m^{-3/4}e^{4\pi\sqrt{m/125}}
		\cos\left(\frac{4\pi m}{5}+\frac{3\pi}{25}\right)
		\bigl(1+O(m^{-1/2})\bigr).
	\end{equation}
	The source gives an additive $O(m^{-1/2})$ inside the trigonometric
	factor. The displayed multiplicative form is equivalent because the
	cosine takes five nonzero values, each of absolute value at least
	$\sin(\pi/50)$. It therefore also gives the signs for sufficiently
	large $m$ in every residue class; the dissection gives them at all
	remaining indices.
	In particular, the formal infinite series
	$\sum_m P_m(5m+1)^{-2s}$ fails the term test at every fixed order.
	We shall use finite expansions with controlled remainders.
	
	Lemma~\ref{lem:cusp} shows that $1+H(z)/\phi$ is nonzero
	in the unit disk. Its holomorphic logarithm is fixed by the value
	$0$ at $z=0$. Write
	$V(z)=\log(1+H(z)/\phi)=\sum_{N\ge1}c_Nz^N$.
	Equation~\eqref{eq:Lambert} gives
	\begin{equation}\label{eq:log-coefficients}
		\begin{aligned}
			c_N={}&\frac{\sqrt5}{2}\sum_{e\mid N}\frac{\chi(e)}e
			-\frac12\sum_{e\mid N}\frac1e\\
			&-\frac{\mathbf1_{5\mid N}}2
			\sum_{e\mid N/5}\frac{\chi(N/(5e))}{e}
			+\frac{\mathbf1_{25\mid N}}2\sum_{e\mid N/25}\frac1e.
		\end{aligned}
	\end{equation}
	Here $\mathbf1$ is the indicator, and a sum with a nonintegral
	divisor argument is omitted. In particular,
	$c_1=\phi^{-1}$ and $c_2=(\sqrt5-3)/4$.
	
	\begin{proposition}\label{prop:c-irrational}
		Every $c_N$ lies in $\mathbb Q(\sqrt5)\setminus\mathbb Q$ and is
		therefore nonzero. For $N\ge1$,
		\begin{equation}\label{eq:c-bound}
			|c_N|\le\frac{\sqrt5+3}{2}(1+\log N).
		\end{equation}
	\end{proposition}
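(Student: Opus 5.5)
The plan is to read off the $\sqrt5$-component of $c_N$ directly from \eqref{eq:log-coefficients}. In the basis $1,\sqrt5$ of $\mathbb Q(\sqrt5)$, the only irrational contribution is $\tfrac{\sqrt5}{2}S(N)$, where
\[
S(N)=\sum_{e\mid N}\frac{\chi(e)}e;
\]
the remaining three sums are rational. Hence $c_N\notin\mathbb Q$ is equivalent to $S(N)\ne0$, and I will prove the stronger statement $S(N)>0$. Once $c_N\notin\mathbb Q$ is established, $c_N\ne0$ follows at once.

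For positivity, $g(e)=\chi(e)/e$ is completely multiplicative, so $S$ is multiplicative and
\[
S(N)=\prod_{p^a\,\|\,N}\sum_{i=0}^{a}\left(\frac{\chi(p)}p\right)^i.
\]
Each local factor is a finite geometric sum with ratio $x=\chi(p)/p\in(-1,1)$, which equals $1$ for $p=5$. If $x\ge0$ the factor is at least $1$. If $x<0$, the sum $1+x+\dots+x^a=(1-x^{a+1})/(1-x)$ is positive because $|x|<1$; concretely, it is at least $1-1/p\ge1/2$. So every factor is positive and $S(N)>0$.

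The bound \eqref{eq:c-bound} follows from the triangle inequality applied to each of the four terms of \eqref{eq:log-coefficients}, using $\sum_{e\mid M}1/e\le\sum_{e\le M}1/e\le1+\log M$ with $M\le N$. The first two terms give at most $\tfrac{\sqrt5}{2}(1+\log N)$ and $\tfrac12(1+\log N)$. The last two terms, with $M=N/5$ and $M=N/25$, together give at most $2\cdot\tfrac12(1+\log N)$. The total is $\tfrac{\sqrt5+3}{2}(1+\log N)$, exactly as stated.

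No step looks hard. The one point needing care is that "$\sqrt5$-coefficient nonzero" really does mean "irrational": this holds because $1$ and $\sqrt5$ are linearly independent over $\mathbb Q$.
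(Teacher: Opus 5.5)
Your proof is correct and matches the paper's argument: the $\sqrt5$-coefficient of $c_N$ is half of $\sum_{e\mid N}\chi(e)/e$, this sum is positive by its factorization over prime powers, and the bound follows by applying the harmonic-sum estimate $1+\log N$ to each of the four divisor sums. Your explicit lower bound $1-1/p$ for the factors with $\chi(p)=-1$ is only a cosmetic variant of the paper's closed form $(1-(-1/p)^{a+1})/(1+1/p)>0$.
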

	\begin{proof}
		The coefficient of $\sqrt5$ in \eqref{eq:log-coefficients} is
		one half of the positive rational number
		\[
		\sum_{e\mid N}\frac{\chi(e)}e
		=\prod_{p^a\parallel N}\left(\sum_{j=0}^a\frac{\chi(p)^j}{p^j}\right).
		\]
		For $\chi(p)=0$ the local factor is $1$; for $\chi(p)=1$ it is
		positive. For $\chi(p)=-1$ it equals
		$(1-(-1/p)^{a+1})/(1+1/p)>0$.
		The other terms in \eqref{eq:log-coefficients} are rational, proving
		irrationality. Each divisor sum in absolute value is bounded by
		$\sum_{j=1}^N j^{-1}\le1+\log N$, which gives \eqref{eq:c-bound}.
	\end{proof}
	
	The three coefficient systems are summarized in Table~\ref{tab:coefficients}.
	For the first two, analyticity in the disk gives the displayed bounds
	for each fixed $0<r<1$; the sharper ordinary asymptotic is
	\eqref{eq:P-growth}. The bound for $c_N$ permits an absolutely
	convergent Dirichlet series, as proved in Theorem~\ref{thm:log-poles}.
	\begin{table}[htbp]
		\centering
		\small
		\renewcommand{\arraystretch}{1.35}
		\begin{tabular}{@{}llll@{}}
			\toprule
			Kernel & Coefficient & Nonzero indices & Bound\\
			\midrule
			$H(z)$ & $[z^{5m+1}]H=P_m$ &
			$m\ge0$, $m\notin\{3,8,13,23\}$ & $O_r(r^{-m})$\\
			$D(z)$ & $[z^k]D=d_k$ & every $k\ge1$ & $O_r(r^{-k})$\\
			$V(z)$ & $[z^N]V=c_N$ & every $N\ge1$ & $O(1+\log N)$\\
			\bottomrule
		\end{tabular}
		\caption{Coefficient support and growth for the three Mellin kernels.
			All other coefficients of $H$ vanish.}
		\label{tab:coefficients}
	\end{table}
	
	\section{Two-cusp continuation and reflection}\label{sec:summation}
	
	\begin{theorem}\label{thm:summation}
		For every $a,b>0$ with $ab=4\pi^2/25$, the ordinary moment satisfies,
		initially for $\Re s>0$,
		\begin{equation}\label{eq:summation}
			\begin{aligned}
				M(s)={}&\frac{(5a/\pi)^{2s}}{s\phi}
				+2\left(\frac5\pi\right)^{2s}
				\sum_{m\ge0}\frac{P_m\Gamma(2s,a(5m+1))}{(5m+1)^{2s}}\\
				&-2\left(\frac{4\pi}{5}\right)^{2s}
				\sum_{k\ge1}d_k k^{2s}\Gamma(-2s,bk).
			\end{aligned}
		\end{equation}
		Both series are locally uniformly convergent entire functions of $s$.
		The formula continues $M$ meromorphically to the plane, with its only
		pole at zero, simple and of residue $1/\phi$.
	\end{theorem}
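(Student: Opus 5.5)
Pass to the variable $t=\pi\sqrt{x}/5$, so that $x=(5t/\pi)^2$ and $e^{-\pi\sqrt x}=e^{-5t}$. This gives
\[
M(s)=2\left(\frac5\pi\right)^{2s}\int_0^\infty t^{2s-1}R(e^{-5t})\,dt ,
\]
and the splitting point $a$ lives in the variable $t$. The integral is then split as $\int_0^a+\int_a^\infty$, and each piece is handled by one cusp expansion.

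\textbf{Upper piece, cusp at infinity.} For $t\ge a$ we have $R(e^{-5t})=H(e^{-t})=\sum_{m\ge0}P_me^{-(5m+1)t}$. The coefficients are subexponential (Table~\ref{tab:coefficients}), so for fixed $r\in(e^{-a},1)$ the series is dominated by $\sum_m C_r r^{-m}e^{-(5m+1)a}\cdot e^{-(t-a)}$, which is summable. We may therefore integrate termwise, using $\int_a^\infty t^{2s-1}e^{-Nt}\,dt=N^{-2s}\Gamma(2s,aN)$. This gives the first series. For entireness, on a compact set $|s|\le S$ we bound $|\Gamma(2s,y)|\le C_S\,y^{c}e^{-y}$ for $y\ge a$. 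The terms are then $O(r^{-m}m^{C}e^{-5am})$, which converges uniformly, and each term is entire.

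\textbf{Lower piece, cusp at zero.} For $0<t<a$, reciprocity in the form $R(e^{2\pi i\tau})=\phi^{-1}-D(e^{-2\pi i/(5\tau)})$, with $2\pi i\tau=-5t$, gives
\[
R(e^{-5t})=\phi^{-1}-D(e^{-4\pi^2/(25t)})=\phi^{-1}-\sum_{k\ge1}d_k e^{-bk\cdot a/t}.
\]
Here $4\pi^2/(25t)=ab/t$. The constant term contributes $2(5/\pi)^{2s}\phi^{-1}a^{2s}/(2s)$, which matches $(5a/\pi)^{2s}/(s\phi)$. For the series, substitute $u=ab/t$. Then
\[
\int_0^a t^{2s-1}e^{-kab/t}\,dt=(ab)^{2s}\int_b^\infty u^{-2s-1}e^{-ku}\,du=(ab)^{2s}k^{2s}\Gamma(-2s,bk).
\]
Since $(5/\pi)^{2s}(ab)^{2s}=(4\pi/5)^{2s}$, this produces the second series with coefficient $-2$. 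Termwise integration for $\Re s>0$ is justified exactly as before, because $e^{-b}<1$ and $d_k=O_r(r^{-k})$. Local uniform convergence for all $s$ follows from the same incomplete-gamma tail bound.

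\textbf{Continuation and the pole.} Both series define entire functions, so the right side is meromorphic. Its only possible singularity is the term $(5a/\pi)^{2s}/(s\phi)$, which has a simple pole at $0$ with residue $1/\phi$. The identity holds for $\Re s>0$, and the identity theorem gives the continuation.

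\textbf{Main obstacle.} The delicate point is the uniform bound on $\Gamma(\pm2s,y)$ for $y\ge\min(a,b)$ and $s$ in a compact set, needed against the merely subexponential growth of $P_m$ and $d_k$. I would first try the crude estimate $|\Gamma(w,y)|\le\int_y^\infty t^{\Re w-1}e^{-t}\,dt\le C\,y^{|\Re w|}e^{-y}(1+y^{-1})$, valid uniformly for $y\ge y_0>0$. The factor $e^{-5am}$ (respectively $e^{-bk}$) then beats $r^{-m}$ once $r$ is chosen close to $1$, for each fixed $a$ and $b$.
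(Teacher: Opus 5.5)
Your proposal is correct and follows the paper's proof essentially step for step. You split at the point corresponding to $a$, use the $q$-expansion $\sum_m P_m e^{-(5m+1)t}$ above it and reciprocity with the $D$-expansion below it, integrate termwise to incomplete gamma functions, justify this with Cauchy-estimate bounds $O_r(r^{-m})$, $O_r(r^{-k})$ against the splitting-point exponentials, and read off the pole from the elementary term. The only differences are cosmetic: you work in the scale $t=\pi\sqrt{x}/5$ instead of the paper's $t=\pi\sqrt{x}$ with split at $5a$, and you spell out the uniform bound on $\Gamma(\pm2s,y)$ for $y$ bounded below, which the paper leaves implicit.
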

	\begin{proof}
		In
		\[
		M(s)=2\pi^{-2s}\int_0^\infty t^{2s-1}R(e^{-t})\,dt
		\]
		split at $5a$. On the upper interval use
		$R(e^{-t})=\sum_mP_me^{-(5m+1)t/5}$.
		On the lower interval, \eqref{eq:reciprocity} and \eqref{eq:H-D}
		give
		\[
		R(e^{-t})=\phi^{-1}
		-D(e^{-4\pi^2/(5t)})
		=\phi^{-1}-\sum_{k\ge1}d_ke^{-4\pi^2k/(5t)}.
		\]
		The constant integrates to the first term in \eqref{eq:summation}.
		Substituting $(5m+1)t/5$ or $4\pi^2k/(5t)$ in the other integrals
		gives the two incomplete-gamma series.
		Cauchy's estimates for $F$ and $D$ give
		$|P_m|\le C_r r^{-m}$ and $|d_k|\le C_r r^{-k}$ for each $0<r<1$.
		Choose $r$ with $-\log r<\min(5a,b)$.
		The splitting-point factors $e^{-a(5m+1)}$ and $e^{-bk}$ then
		dominate these coefficient bounds, uniformly for $s$ in compact
		sets. This proves the interchanges and local uniform convergence.
		The pole and residue are supplied by the elementary first term.
	\end{proof}
	
	Appendix~\ref{app:error} gives an explicit truncation bound for
	\eqref{eq:summation} at $s=1,2$ and the balanced split.
	
	\begin{proposition}\label{prop:reflection}
		The continuation of $M$ satisfies
		\begin{equation}\label{eq:reflection}
			M(s)+16^sM(-s)
			=-\frac1\phi\int_0^\infty x^{s-1}f(x)f(16/x)\,dx,
		\end{equation}
		with an entire integral on the right and a removable value at zero
		on the left. For $\Re s>0$,
		\begin{equation}\label{eq:positive-reflection}
			-16^sM(-s)=\int_0^\infty x^{s-1}
			\frac{\sqrt5f(x)}{\phi+f(x)}\,dx.
		\end{equation}
		For real $s>0$, the ratio $-16^sM(-s)/M(s)$ increases strictly
		from $1$ to $\sqrt5/\phi$. In particular,
		\begin{equation}\label{eq:sign-bounds}
			-\frac{\sqrt5}{\phi}16^{-s}M(s)<M(-s)<-16^{-s}M(s)<0,
		\end{equation}
		and both constants are optimal.
	\end{proposition}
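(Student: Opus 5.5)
The plan is to derive everything from reciprocity in the $x$-variable. With $t=\pi\sqrt x$ one has $4\pi^2/t=\pi\sqrt{16/x}$, so \eqref{eq:reciprocity} reads $(\phi+f(x))(\phi+f(16/x))=\phi\sqrt5$. Solving for $f(16/x)$ gives
\[
f(16/x)=\frac{\phi\sqrt5}{\phi+f(x)}-\phi=\phi^{-1}-\frac{\sqrt5 f(x)}{\phi+f(x)}\cdot\frac1\phi\cdot\phi ,
\]
which simplifies to $f(16/x)=\phi^{-1}-\sqrt5f(x)/(\phi+f(x))$; this is the same identity as $R=\phi^{-1}-D$ in Lemma~\ref{lem:cusp}. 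Expanding the product form instead gives $f(x)+f(16/x)=\phi^{-1}\bigl(1-f(x)f(16/x)\bigr)$, using $\phi^2+1=\phi\sqrt5$ and dividing by $\phi$.

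\textbf{Step 1: \eqref{eq:positive-reflection}.} For $-\Re s<0$ the integral defining $M(-s)$ diverges at $x=0$, because $f\to\phi^{-1}$ there. I will therefore work with the continuation from Theorem~\ref{thm:summation}, or equivalently with a subtracted representation. For $\Re s>0$ I claim
\[
M(-s)=\int_0^\infty x^{-s-1}\bigl(f(x)-\phi^{-1}\bigr)\,dx .
\]
This integral converges at $0$ by \eqref{eq:endpoints} and at $\infty$ since $\Re s>0$. It agrees with the continuation because splitting at $1$ and adding back $\phi^{-1}\int_0^1 x^{-s-1}dx=-\phi^{-1}/s$ (valid for $\Re(-s)>0$) gives the standard continuation; the identity theorem then matches it to Theorem~\ref{thm:summation}.

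Next substitute $x\mapsto16/x$. Then
\[
16^s M(-s)=\int_0^\infty x^{s-1}\bigl(f(16/x)-\phi^{-1}\bigr)\,dx=-\int_0^\infty x^{s-1}\frac{\sqrt5f(x)}{\phi+f(x)}\,dx,
\]
which is \eqref{eq:positive-reflection}.

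\textbf{Step 2: \eqref{eq:reflection}.} Subtracting, for $\Re s>0$,
\[
M(s)+16^sM(-s)=\int_0^\infty x^{s-1}\Bigl(f-\frac{\sqrt5f}{\phi+f}\Bigr)dx=\int_0^\infty x^{s-1}f\,\frac{f-\phi^{-1}}{\phi+f}\,dx .
\]
The reciprocity formula for $f(16/x)$ gives $\phi^{-1}-f(x)=\phi\sqrt5\,f(x)\ldots$; rather than track this form, I will use the sum identity. Setting $g=f(16/x)$, it gives $f\cdot\frac{\phi^{-1}-f}{\phi+f}=\phi^{-1}fg$ after clearing denominators, which I will verify algebraically. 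This yields the right side of \eqref{eq:reflection}.

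The integrand $f(x)f(16/x)$ is exponentially small at both ends, by \eqref{eq:endpoints} applied to each factor. Hence the integral is entire by locally uniform dominated convergence, as in the proof of Proposition~\ref{prop:rrlog-transform}. The identity of meromorphic functions extends to all $s$. At $s=0$ the poles $1/(s\phi)$ of $M(s)$ and $-1/(s\phi)$ of $16^sM(-s)$ cancel, so the singularity is removable.

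\textbf{Step 3: monotonicity of the ratio.} For real $s>0$, the ratio $-16^sM(-s)/M(s)$ equals
\[
\frac{\int x^{s-1}f\,w\,dx}{\int x^{s-1}f\,dx},\qquad w(x)=\frac{\sqrt5}{\phi+f(x)} .
\]
This is the average of $w$ against the probability measure $\mu_s\propto x^{s-1}f(x)\,dx$. The first point is that $f$ is strictly decreasing in $x$, since $R(q)$ increases in $q$. To see this, reciprocity shows $R(e^{-t})$ is a decreasing Möbius image of $R(e^{-4\pi^2/t})$, so it suffices to know monotonicity on one side. I would get it from the product, or from $\mathcal K>0$ being increasing via \eqref{eq:rrlog-lambert}: every coefficient of the series there has positive $\sqrt5$-part sum $\sum\chi(d)/d>0$, so $\mathcal K(q)$ increases in $q$. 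Since $\mathcal K$ is an increasing function of $R$, $R$ increases too. Consequently $w$ is strictly increasing in $x$, from $\sqrt5/(\phi+\phi^{-1})=1$ at $0$ to $\sqrt5/\phi$ at $\infty$.

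The family $\mu_s$ is increasing in the monotone likelihood ratio order, because the density ratio $x^{s'-s}$ is increasing. Hence the mean of $w$ is strictly increasing in $s$; equivalently, the derivative is a covariance of $\log x$ and $w$, which is positive by Chebyshev's inequality. As $s\to0^+$, the mass of $\mu_s$ concentrates at $x=\infty$ and the ratio tends to $\sqrt5/\phi$? This must be checked carefully.

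Near $s\to0^+$, $\int x^{s-1}f$ is dominated by the small-$x$ region, where $f\approx\phi^{-1}$ and the integrand behaves like $\phi^{-1}/s$; so the measure concentrates at $x\to0$ and $w\to1$. As $s\to\infty$, the mass moves to large $x$ and $w\to\sqrt5/\phi$. So the ratio rises from $1$ to $\sqrt5/\phi$, which gives \eqref{eq:sign-bounds} together with $M(s)>0$. Optimality follows from the two limits. The main obstacle is Step 3's limit analysis together with the monotonicity of $f$, which I intend to obtain from the positivity of the coefficients of $\mathcal K$.
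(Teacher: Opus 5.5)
Your proof is correct, but it differs from the paper's in two places.

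\textbf{Order of the two reflection formulas.} The paper splits the regularized integral at the fixed point $x=4$ of $x\mapsto16/x$. This proves \eqref{eq:reflection} on all of $\mathbb C$ at once, since the terms $\pm4^s/(\phi s)$ cancel visibly, and \eqref{eq:positive-reflection} then follows by rearrangement. You make \eqref{eq:positive-reflection} primary instead, through the subtracted integral $M(-s)=\int_0^\infty x^{-s-1}(f(x)-\phi^{-1})\,dx$ on $\Re s>0$, and extend \eqref{eq:reflection} by the identity theorem. Your check of the subtracted formula cites the wrong half of the split: on $\Re s>0$ the relevant identity is $\phi^{-1}\int_1^\infty x^{-s-1}\,dx=\phi^{-1}/s$. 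The claim itself is right.

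\textbf{Monotonicity of $f$.} This is the more substantive difference. The paper uses Ramanujan's derivative identity $q\frac{d}{dq}\log R(q)=(q;q)_\infty^5/(5(q^5;q^5)_\infty)$, which is an extra classical input. You derive monotonicity internally. The coefficients $\sqrt5\sum_{d\mid n}\chi(d)/d$ in \eqref{eq:rrlog-lambert} are positive by the Euler-factor computation in Proposition~\ref{prop:c-irrational}, so $\mathcal K$ increases strictly in $q$. The map $r\mapsto\log\frac{1+r/\phi}{1-\phi r}$ increases strictly on $(0,\phi^{-1})$, where $R$ takes its values, so $R$ increases strictly in $q$. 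Your route needs only the factorizations behind Lemma~\ref{lem:cusp}. The paper's route gives the stronger fact that the logarithmic derivative is an explicit positive eta quotient. The covariance step is the same in both.

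\textbf{Endpoint limits.} These are only sketched, and your first guess at $s\to0^+$ was backwards before you corrected it. Each closes in one line:
\begin{itemize}
\item As $s\to0^+$, $\int_\delta^\infty x^{s-1}f\,dx$ stays bounded while $M(s)\sim1/(\phi s)$, so $\mu_s((\delta,\infty))\to0$.
\item For $s\ge1$, $\int_0^Lx^{s-1}f\,dx\le L^s/(\phi s)$ while $M(s)\ge L(2L)^{s-1}f(3L)$, so $\mu_s((0,L))\to0$ as $s\to\infty$.
\end{itemize}
The second comparison avoids the asymptotic $M(s)\sim2\Gamma(2s)(5/\pi)^{2s}$ that the paper uses.
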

	\begin{proof}
		The endpoint estimates give the regularized representation
		\begin{equation}\label{eq:continuation}
			M(s)=\frac{4^s}{\phi s}
			+\int_0^4x^{s-1}(f(x)-\phi^{-1})\,dx
			+\int_4^\infty x^{s-1}f(x)\,dx.
		\end{equation}
		The integrals are entire. Substitute $x\mapsto16/x$ in the corresponding
		expression for $16^sM(-s)$, add, and use
		$f(x)f(16/x)+\phi(f(x)+f(16/x))=1$.
		This proves \eqref{eq:reflection}; rearrangement gives
		\eqref{eq:positive-reflection}.
		The multiplier $\sqrt5/(\phi+f(x))$ lies strictly between $1$ and
		$\sqrt5/\phi$.
		
		Ramanujan's derivative identity \cite{Berndt1991}, also obtained by
		differentiating \cite[Eq.~(3)]{BagisGlasser2009}, is
		\[
		q\frac{d}{dq}\log R(q)=
		\frac{(q;q)_\infty^5}{5(q^5;q^5)_\infty}>0\qquad(0<q<1).
		\]
		Hence $f$ decreases strictly. The ratio in the statement is the mean
		of the strictly increasing function $\sqrt5/(\phi+f(x))$ under the
		probability measure $x^{s-1}f(x)\,dx/M(s)$.
		Its derivative is the covariance with $\log x$, which is strictly
		positive. Indeed, twice that covariance is the double integral of
		the product of the increments of the two increasing functions.
		The residue of $M$ at zero gives the lower endpoint limit.
		The continued fraction gives
		$0<e^{-\pi\sqrt{x}/5}-f(x)<e^{-6\pi\sqrt{x}/5}$. Integrating yields
		\[
		0<1-\frac{M(s)}{2\Gamma(2s)(5/\pi)^{2s}}<6^{-2s}
		\qquad(s>0),
		\]
		and hence $M(s)\sim2\Gamma(2s)(5/\pi)^{2s}$.
		For fixed $L>0$, the mass of $(0,L)$ is at most
		$L^s/(s\phi M(s))$, which tends to zero.
		The bounded multiplier therefore has mean tending to $\sqrt5/\phi$.
	\end{proof}
	
	Strict log-convexity of $M$ on the positive axis follows immediately
	from strict Cauchy--Schwarz for $1$ and $\log x$ in the same measure.
	
	\begin{proposition}\label{prop:decay}
		For every compact real interval $I$ and $0<\varepsilon<\pi$,
		\begin{equation}\label{eq:decay}
			M(\sigma+it)=O_{I,\varepsilon}
			\bigl(e^{-(\pi-\varepsilon)|t|}\bigr)
			\qquad(\sigma\in I,\ |t|\ge1).
		\end{equation}
	\end{proposition}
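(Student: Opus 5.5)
The plan is to rotate the contour of the Mellin integral. Write
\[
M(s)=2\pi^{-2s}\int_0^\infty t^{2s-1}R(e^{-t})\,dt,
\]
and use the holomorphic lift $R(e^{-t})=e^{-t/5}F(e^{-t})$ on $\Re t>0$. For $|\theta|<\pi/2$, rotate to the ray $t=re^{i\theta}$. The factor $t^{2s-1}=r^{2s-1}e^{i\theta(2s-1)}$ then contributes $|e^{2i\theta s}|=e^{-2\theta\,\Im s}$. With $s=\sigma+it$, $t>0$, and $\theta$ close to $\pi/2$, this gives the factor $e^{-2\theta t}$, which is nearly $e^{-\pi t}$. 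The rotation can only be carried out directly when $\Re s>0$. To handle every compact $I$, I would first work with the regularized representation \eqref{eq:continuation}. In the $t$-variable it reads
\[
M(s)=\frac{(5a/\pi)^{2s}}{s\phi}+2\pi^{-2s}\int_0^{T}t^{2s-1}\bigl(R(e^{-t})-\phi^{-1}\bigr)dt+2\pi^{-2s}\int_T^\infty t^{2s-1}R(e^{-t})\,dt.
\]
A cleaner choice is the global form
\[
M(s)=\frac{1}{s\phi}\,(\cdots)+2\pi^{-2s}\int_0^\infty t^{2s-1}\bigl(R(e^{-t})-\phi^{-1}e^{-t}\bigr)dt .
\]
Its subtracted term has Mellin transform $\Gamma(2s)$, which is itself $O(e^{-(\pi-\varepsilon)|t|})$ by Stirling. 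The integral then converges for $\Re s>-1/2$, because the bracket is $O(e^{-c/t})$ at $0$ and $O(e^{-t/5})$ at $\infty$. For smaller $\sigma$ I would subtract further elementary terms, or instead iterate the functional relation of Proposition~\ref{prop:reflection}. The cleanest option is simply to take the bracket as written: it is $O(e^{-c/t})$ at zero to all orders, so the integral is in fact entire.

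The key analytic input is a bound for $g(t)=R(e^{-t})-\phi^{-1}e^{-t}$ in a sector $|\arg t|\le\pi/2-\delta$. I need $g$ to be bounded there, exponentially decaying as $|t|\to\infty$, and to decay faster than any power as $t\to0$ inside the sector. Decay at infinity follows from the product \eqref{eq:F}, since $|e^{-t}|=e^{-\Re t}$ and $\Re t\ge|t|\sin\delta$. Behaviour at zero comes from the reciprocity relation \eqref{eq:reciprocity}, extended to $\Re t>0$ as explained in the introduction:
\[
R(e^{-t})=\phi^{-1}-D\bigl(e^{-4\pi^2/(5t)}\bigr).
\]
Here $\Re(1/t)=\Re t/|t|^2\ge\sin\delta/|t|$, so $|e^{-4\pi^2/(5t)}|\le e^{-c_\delta/|t|}$. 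Since $D(z)=O(|z|)$ near $0$ by Lemma~\ref{lem:cusp}, we get $R(e^{-t})-\phi^{-1}=O(e^{-c_\delta/|t|})$, and the same holds for $g$ because $\phi^{-1}(1-e^{-t})$ is only $O(|t|)$. That linear error is a problem, so I would instead subtract $\phi^{-1}e^{-t^{-1}}$-type terms or, more simply, split the contour. In that version I use the rotated ray only for the integrals over $(T,\infty)$ and $(0,T)$ separately, and close each with an arc of radius $T$. The arc contributes $O(T^{2\sigma}e^{2\theta|t|})$ times a bounded function, and this is dominated the same way: on the arc $|e^{2i\arg(t)s}|\le e^{2\theta|\Im s|}$ when the arc is swept toward the favourable side. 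I expect the bookkeeping at this point, namely matching the elementary pole term $(5a/\pi)^{2s}/(s\phi)$ with the rotated constant so that no large factor $e^{+2\theta|t|}$ survives, to be the main obstacle.

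To resolve it, I would rotate the full regularized integral of the previous paragraph, using the entire version with $\phi^{-1}$ subtracted on $(0,1)$ and nothing on $(1,\infty)$. I would first write it, by Cauchy's theorem, as
\[
\frac{e^{2i\theta s}}{s\phi}\ +\ e^{2i\theta s}\int_0^1 r^{2s-1}\bigl(R(e^{-re^{i\theta}})-\phi^{-1}\bigr)dr\ +\ e^{2i\theta s}\int_1^\infty r^{2s-1}R(e^{-re^{i\theta}})\,dr.
\]
This is valid first for $\Re s$ in a suitable strip, and then for all $s$ by analytic continuation, because both sides are holomorphic and the rotated integrals converge for all $s$ by the sector bounds above. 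The arc at radius $1$ cancels between the two pieces. Take $\theta=\pi/2-\delta$ for $t>0$ and $\theta=-(\pi/2-\delta)$ for $t<0$. Then each term is $O_{I,\delta}(e^{-(\pi-2\delta)|t|})$, uniformly for $\sigma\in I$ with $|t|\ge1$, where $|t|\ge1$ keeps $1/s$ bounded. Choosing $2\delta=\varepsilon$ gives \eqref{eq:decay}.
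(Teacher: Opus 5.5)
Your final argument is correct and is essentially the paper's proof. You rotate the Mellin contour to an angle just short of the edge of the sector of holomorphy, and you bound the kernel there by the product at one cusp and by reciprocity at the other; your sector $|\arg t|<\pi/2$ is the paper's $|\arg x|<\pi$ under $x=(t/\pi)^2$.

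The only difference is how you reach every $\sigma\in I$. The paper integrates by parts to the entire integral $sM(s)=-\int_0^\infty x^sf'(x)\,dx$, which needs Cauchy-estimate bounds on $f'$. You subtract $\phi^{-1}$ on $(0,1)$ and need bounds only on $R$ itself. The radius-one arcs do not cancel outright, though: only their $R$-parts cancel, and the leftover $\phi^{-1}(e^{2i\theta s}-1)/(2s)$ converts $1/(2s\phi)$ into the term $e^{2i\theta s}/(2s\phi)$ that you wrote.
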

	\begin{proof}
		With the principal square root, $f(z)$ is holomorphic in $|\arg z|<\pi$.
		The product and reciprocity extend \eqref{eq:endpoints} uniformly to
		each closed subsector, with positive sector-dependent constants in
		the decaying exponentials. Cauchy's estimate in a slightly larger
		subsector gives, for suitable constants $c,C>0$,
		\[
		|f'(z)|\le \frac{C}{|z|}
		\begin{cases}
			e^{-c/\sqrt{|z|}},&0<|z|\le1,\\
			e^{-c\sqrt{|z|}},&|z|\ge1.
		\end{cases}
		\]
		The constants may depend on the closed subsector.
		Consequently
		\[
		sM(s)=-\int_0^\infty x^s f'(x)\,dx
		\]
		holds first by integration by parts for $\Re s>0$ and then by
		continuation for all $s$. The integral on the right is entire.
		Rotate its path to $\arg z=\operatorname{sgn}(t)(\pi-\varepsilon)$.
		Both end arcs vanish by the sector bounds.
		Writing $z=e^{i\theta}u$, the factor $e^{i\theta(s+1)}$ has modulus
		$e^{-(\pi-\varepsilon)|t|}$, while
		$\int_0^\infty u^\sigma|f'(e^{i\theta}u)|\,du$ is uniformly bounded
		for $\sigma\in I$. Division by $|s|\ge1$ proves the assertion.
	\end{proof}
	
	\section{Exact poles and nonholonomicity}\label{sec:sequences}
	
	We first isolate the general analytic argument.
	A function is D-finite over $\mathbb C(s)$ if it satisfies a nonzero
	homogeneous linear differential equation with coefficients in
	$\mathbb C(s)$. A sequence is P-recursive over $\mathbb C[n]$ if it
	satisfies a nonzero finite linear recurrence with coefficients in
	$\mathbb C[n]$ for all sufficiently large integers.
	Allowing rational functions of $n$ gives the same definition.
	
	\begin{samepage}
		\begin{lemma}\label{lem:Mellin}
			Let $W(z)=\sum_{k\ge1}a_kz^k$ be analytic in the unit disk and
			admit a continuous extension to the real interval $[0,1]$, with
			infinitely many nonzero coefficients. Put
			\[
			N(s)=\frac1{\Gamma(2s)}\int_0^\infty t^{2s-1}W(e^{-t})\,dt
			\qquad(\Re s>0).
			\]
			For each fixed $K$, and $\sigma=\Re s>0$,
			\begin{equation}\label{eq:generic-remainder}
				\left|N(s)-\sum_{k=1}^K a_k k^{-2s}\right|
				\le C_K(K+1)^{-2\sigma}
				\frac{\Gamma(2\sigma)}{|\Gamma(2s)|}.
			\end{equation}
			Every fixed derivative of the remainder is
			$O_{K,j}((K+1)^{-2s})$ as real $s\to+\infty$.
			The generating function $\sum_{n\ge1}N(n)z^n$ extends meromorphically
			to the plane. For every $T>0$ its continuation is
			\begin{equation}\label{eq:ML}
				\begin{aligned}
					\sum_{n\ge1}N(n)z^n={}&
					\sqrt z\int_0^T W(e^{-t})\sinh(\sqrt z\,t)\,dt\\
					&+\frac{\sqrt z}{2}\sum_{k\ge1}a_k
					\left\{\frac{e^{-(k-\sqrt z)T}}{k-\sqrt z}
					-\frac{e^{-(k+\sqrt z)T}}{k+\sqrt z}\right\}.
				\end{aligned}
			\end{equation}
			Here the left side initially denotes its germ at zero.
			The poles are exactly $k^2$ with $a_k\ne0$, simple and of residue
			$-k^2a_k$. The generating function is not D-finite over $\mathbb C(z)$,
			$N$ is not D-finite over $\mathbb C(s)$, and $(N(n))$ is not
			P-recursive over $\mathbb C[n]$.
		\end{lemma}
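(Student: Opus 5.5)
I would begin with the remainder estimate \eqref{eq:generic-remainder}. Write $W(e^{-t})=\sum_{k\le K}a_ke^{-kt}+W_K(e^{-t})$, where $W_K(z)=\sum_{k>K}a_kz^k=z^{K+1}U_K(z)$. Then $U_K$ is analytic in the disk. By the continuity of $W$ on $[0,1]$, $U_K(e^{-t})$ is bounded on $(0,\infty)$, say by $C_K$: near $t=0$ use the continuity of $W$ and of the finite polynomial, and for $t$ large use analyticity. The gamma integral then turns the finite part into $\Gamma(2s)\sum_{k\le K}a_kk^{-2s}$. The remainder integral is bounded in modulus by $C_K\int_0^\infty t^{2\sigma-1}e^{-(K+1)t}\,dt=C_K\Gamma(2\sigma)(K+1)^{-2\sigma}$, and dividing by $|\Gamma(2s)|$ gives \eqref{eq:generic-remainder}. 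For the derivatives along real $s$, I would differentiate under the integral. Each derivative brings powers of $\log t$ and derivatives of $1/\Gamma(2s)$. Splitting $t^{2s-1}e^{-(K+1)t}$ with a small loss, or using $\int t^{2s-1}|\log t|^je^{-(K+1)t}dt\ll \Gamma(2s)(K+1)^{-2s}(\log s)^j$ against $\psi$-factors, keeps the size $O((K+1)^{-2s})$ up to factors of $\log s$. These are absorbed by taking $K$ slightly larger, since the next coefficient has a strictly smaller base. I expect this to need some care but no new idea.

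Next comes the generating function. For $|z|<1$ and integer $n$, $N(n)=\Gamma(2n)^{-1}\int t^{2n-1}W(e^{-t})dt$. Summing $\sum_n z^nt^{2n-1}/(2n-1)!=\sqrt z\sinh(\sqrt z t)$ gives $\sum N(n)z^n=\sqrt z\int_0^\infty W(e^{-t})\sinh(\sqrt z t)\,dt$. This is legitimate for $|z|<1$ because $W(e^{-t})=O(e^{-k_0t})$ with $k_0\ge1$ the first nonzero index, so the interchange is absolutely justified. I would split at $T$. On $[T,\infty)$ expand $W(e^{-t})=\sum a_ke^{-kt}$, which converges absolutely and uniformly there because $e^{-T}<1$ and the coefficients are $O_r(r^{-k})$, and integrate termwise. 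The exponential pieces give exactly the bracket in \eqref{eq:ML}. The first integral is entire in $z$, since $\sqrt z\sinh(\sqrt z t)$ is entire in $z$. In the series, the terms are even in $\sqrt z$ and hence functions of $z$. Away from the points $k^2$, the terms are dominated by $|a_k|e^{-(k-|\sqrt z|)T}/k$, which converges locally uniformly. This gives meromorphy on $\mathbb C$, with poles only at $z=k^2$. The residue at $z=k^2$ comes from $\tfrac{\sqrt z}{2}a_k/(k-\sqrt z)$: with $k-\sqrt z\approx -(z-k^2)/(2k)$ the residue is $\tfrac k2a_k(-2k)=-k^2a_k$. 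This is nonzero exactly when $a_k\neq0$, so the poles are exactly the $k^2$ with $a_k\ne0$.

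Finally, the nonholonomicity statements. A D-finite power series has a continuation with only finitely many singularities, namely the zeros of its leading coefficient. Our series has infinitely many poles, because infinitely many $a_k\ne0$, so it is not D-finite. By Stanley's equivalence \cite{Stanley1980}, $(N(n))$ is then not P-recursive. For $N(s)$ itself, I expect the main obstacle. A D-finite function of $s$ would yield a differential equation with rational coefficients, which I would evaluate along real $s\to\infty$. Using the expansion from the first step with the exponential separation of \cite{BellEtAl2008}, the functions $k^{-2s}$ for distinct $k$ have distinct exponential rates $e^{-2s\log k}$. The operator has rational coefficients, so applying it to $N$ and comparing the terms of largest exponential size shows that it would annihilate each $a_kk^{-2s}$ modulo smaller rates. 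That forces a fixed polynomial to vanish at $-2\log k$ for infinitely many $k$ with $a_k\ne0$, which is a contradiction. The delicate point is that the derivative remainder bounds from the first step must be strong enough to isolate each successive rate, uniformly over finitely many derivatives.
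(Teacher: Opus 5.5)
Your proposal is correct and follows the paper's proof essentially step for step. It uses the same bounded Taylor remainder $z^{-K-1}W_K(z)$ on $[0,1]$, the same $\sinh$ kernel split at $T$ with paired summands even in $\sqrt z$, the same pole-counting and Stanley argument, and the same inductive exponential separation forcing $\sum_j p_j(s)(-2\log k)^j\equiv0$ for infinitely many $k$.

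The only local difference is the derivative bound. The paper applies Cauchy's estimates on fixed-radius disks about large real $s$, where Stirling keeps $\Gamma(2\Re s)/|\Gamma(2s)|$ bounded, and this gives $O_{K,j}((K+1)^{-2s})$ directly. Your differentiation under the integral incurs $(\log s)^j$ losses, which you correctly remove by peeling off the exact term $a_{K+1}(K+1)^{-2s}$ and applying the cruder bound to the remainder at index $K+1$.
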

	\end{samepage}
	\begin{proof}
		The defining integral is holomorphic for $\Re s>0$ by local
		dominated differentiation: $W(e^{-t})$ is bounded as $t\downarrow0$
		and is $O(e^{-t})$ as $t\to\infty$. The same domination holds
		after multiplication by any fixed power of $\log t$.
		The Taylor remainder divided by $z^{K+1}$ is bounded on $[0,1]$.
		Integrating this bound against $t^{2s-1}$ after $z=e^{-t}$
		proves \eqref{eq:generic-remainder}.
		On fixed-radius disks centered at sufficiently large real $s$,
		Stirling's formula bounds
		$\Gamma(2\Re s)/|\Gamma(2s)|$ uniformly.
		Cauchy's estimates therefore give the derivative assertion.
		
		For $|z|<1$, absolute summation under the integral gives
		\[
		\sum_{n\ge1}N(n)z^n
		=\sqrt z\int_0^\infty W(e^{-t})\sinh(\sqrt z\,t)\,dt.
		\]
		Split at $T$ and expand $W$ on the upper interval.
		Elementary exponential integration gives \eqref{eq:ML}.
		The finite integral is entire in $z$; each paired summand is
		unchanged by $\sqrt z\mapsto-\sqrt z$. Subexponential coefficient
		bounds imply local uniform convergence of the series off the
		candidate poles. Each candidate belongs to just one summand,
		whose residue is $-k^2a_k$. This proves the continuation and pole set.
		Infinitely many poles exclude a polynomial-coefficient differential
		equation for this generating function, since its finite singularities
		would lie among the finitely many zeros of the leading polynomial.
		The D-finite/P-recursive equivalence
		\cite[Theorem~1.5]{Stanley1980} excludes a recurrence for $N(n)$.
		
		Finally, suppose $\sum_{j=0}^r p_j(s)N^{(j)}(s)=0$ with
		$p_j\in\mathbb C[s]$. The finite expansion and its derivatives permit
		successive comparison of the terms $k^{-2s}$.
		For each $k$ with $a_k\ne0$, set
		$Q_k(s)=\sum_jp_j(s)(-2\log k)^j$ and let
		$d=\max_j\deg p_j$. After the earlier nonzero terms have been
		eliminated, the remainder estimate with $K=k$ gives
		\[
		Q_k(s)=O\!\left(s^d\left(\frac{k}{k+1}\right)^{2s}\right).
		\]
		A polynomial with this decay is zero. Induction over the support
		of $(a_k)$ therefore gives $Q_k\equiv0$ for every nonzero coefficient.
		For each power of $s$, its coefficient in $\sum_jp_j(s)X^j$
		is a polynomial in $X$ with infinitely many roots $-2\log k$.
		Thus all $p_j$ vanish. This adapts the exponential-separation argument in
		\cite[proof of Theorem~6.1]{BellEtAl2008}. Clearing denominators proves the assertion
		over $\mathbb C(s)$.
	\end{proof}
	
	At negative orders, $M$ denotes its meromorphic continuation,
	not the defining integral in \eqref{eq:moments}. Define
	\begin{equation}\label{eq:Bpm}
		B_+(s)=\frac{(\pi/5)^{2s}M(s)}{2\Gamma(2s)},\qquad
		B_-(s)=-\frac{(4\pi/5)^{2s}M(-s)}{2\Gamma(2s)},\qquad
		C_\pm(z)=\sum_{n\ge1}B_\pm(n)z^n.
	\end{equation}
	The pole of $M$ at zero is canceled in these normalizations, so
	both $B_\pm$ are entire and $B_\pm(0)=1/\phi$.
	They are precisely the functions $N$ in Lemma~\ref{lem:Mellin}
	for $W=H$ and $W=D$, respectively. The first assertion follows
	by scaling $t=\pi\sqrt{x}/5$ in $M$.
	For the second, apply the same scaling to \eqref{eq:positive-reflection}.
	Both kernels tend to $1/\phi$ at $z=1$.
	
	\begin{theorem}\label{thm:poles}
		For real $s\to+\infty$ and every fixed $K$,
		\begin{align}
			B_+(s)&=\sum_{m=0}^K\frac{P_m}{(5m+1)^{2s}}
			+O_K((5K+6)^{-2s}),\label{eq:positive-expansion}\\
			B_-(s)&=\sum_{k=1}^K\frac{d_k}{k^{2s}}
			+O_K((K+1)^{-2s}).\label{eq:negative-expansion}
		\end{align}
		The functions $C_+$ and $C_-$ have radius of convergence $1$ at
		zero and meromorphic continuations to the plane.
		The poles of $C_+$ are all $(5m+1)^2$, $m\ge0$, except
		\[
		256,\quad1681,\quad4356,\quad13456;
		\]
		their residues are $-(5m+1)^2P_m$.
		The poles of $C_-$ are every positive square $k^2$, with residue
		$-k^2d_k$. The functions $C_\pm$ are non-D-finite over $\mathbb C(z)$,
		and $B_\pm$ are non-D-finite over $\mathbb C(s)$. All
		four sequences $B_+(n),B_-(n),M(n),M(-n)$ are non-P-recursive over
		$\mathbb C[n]$.
	\end{theorem}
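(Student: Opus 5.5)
The plan is to apply Lemma~\ref{lem:Mellin} twice, once with $W=H$ and once with $W=D$, after checking its hypotheses. For $W=H$: $H(z)=zF(z^5)$ is analytic in the unit disk by Lemma~\ref{lem:cusp}. On $[0,1]$ we have $H(e^{-t/5})=R(e^{-t})$, which extends continuously to $z=1$ with value $1/\phi$ by \eqref{eq:endpoints}. The coefficients are $a_{5m+1}=P_m$, and all other coefficients vanish. For $W=D$: analyticity again comes from Lemma~\ref{lem:cusp}. On $(0,1)$ the kernel is $\sqrt5 H/(\phi+H)$, and $H\to\phi^{-1}$ at $z=1$, so $D\to\sqrt5\phi^{-1}/(\phi+\phi^{-1})=1/\phi$. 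Thus $D$ extends continuously to $[0,1]$, with coefficients $a_k=d_k$. In both cases infinitely many coefficients are nonzero: by \eqref{eq:P-zeros} only four $P_m$ vanish, and by Proposition~\ref{prop:d-quadratic} no $d_k$ vanishes. The identification of $B_\pm$ with the functions $N$ of the lemma is the scaling remark preceding the theorem. For $B_-$ it uses \eqref{eq:positive-reflection} with $t=\pi\sqrt{x}/5$. The resulting variable in $D$ is $e^{-t/5}$, equivalently $z=e^{-u}$ with $u=t/5$; the normalization $(4\pi/5)^{2s}$ absorbs the factor $16^s$ and the powers of $5$. I will verify this bookkeeping once, since it fixes that the Dirichlet exponents are $k^{-2s}$ and not $(k/5)^{-2s}$.

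The expansions come next. Applying \eqref{eq:generic-remainder} with $K$ replaced by the index $5K+1$ for $H$ gives \eqref{eq:positive-expansion}: the next possibly nonzero index is $5K+6$, and terms with $5\nmid(k-1)$ vanish. For real $s$ the factor $\Gamma(2\sigma)/|\Gamma(2s)|$ equals $1$. The estimate \eqref{eq:negative-expansion} is the lemma applied verbatim to $D$.

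The remaining claims follow from the lemma. It gives meromorphic continuation of $C_\pm$ via \eqref{eq:ML}, with poles exactly at $k^2$ for $a_k\ne0$ and residues $-k^2a_k$. For $C_+$ these are the squares $(5m+1)^2$ with $P_m\ne0$. The excluded indices $m=3,8,13,23$ give $16^2,41^2,66^2,116^2$, which are the four listed numbers. For $C_-$, Proposition~\ref{prop:d-quadratic} makes every $k^2$ a pole. The radius of convergence is exactly $1$: the series converges for $|z|<1$ by the lemma's germ representation, and the nearest pole is $z=1$ (from $a_1=P_0=1$, respectively $d_1=\sqrt5\phi^{-1}\ne0$). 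The non-D-finiteness of $C_\pm$ and $B_\pm$ and the non-P-recursiveness of $B_\pm(n)$ are stated in the lemma.

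The main obstacle is transferring non-P-recursiveness to $M(n)$ and $M(-n)$. We have $M(n)=2\Gamma(2n)(5/\pi)^{2n}B_+(n)$ and $M(-n)=-2\Gamma(2n)(5/(4\pi))^{2n}B_-(n)$. The multiplier $g(n)=\Gamma(2n)c^{2n}$ satisfies the first-order recurrence $g(n+1)=(2n)(2n+1)c^2g(n)$ with polynomial coefficients. So I would argue that if $\sum_j p_j(n)M(n+j)=0$, then dividing by $g(n)$ and using $g(n+j)/g(n)\in\mathbb Q(c)[n]$ yields a nonzero polynomial recurrence for $B_\pm(n)$. The coefficients remain nonzero because the ratios $g(n+j)/g(n)$ are nonzero polynomials, and this contradicts the lemma. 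The points I expect to need care are this closure step and the exact branch and normalization in the $D$ identification.
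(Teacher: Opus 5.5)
Your proposal is correct and follows the paper's proof: it applies Lemma~\ref{lem:Mellin} to $W=H$ and $W=D$, reads the pole sets from \eqref{eq:P-zeros} and the nonvanishing of every $d_k$, gets radius $1$ from $a_1\ne0$, and transfers non-P-recursiveness to $M(\pm n)$ via the polynomial shift quotients of $\Gamma(2n)c^{2n}$. There are two bookkeeping slips: to obtain $(5K+6)^{-2s}$ you should apply \eqref{eq:generic-remainder} with $K$ replaced by $5K+5$, not $5K+1$ (the partial sums coincide because the coefficients of $z^{5K+2},\dots,z^{5K+5}$ vanish, whereas $5K+1$ only yields $(5K+2)^{-2s}$); and with $t=\pi\sqrt x/5$ one has $f(x)=H(e^{-t})$, so the kernel is $D(e^{-t})$ itself, not $D(e^{-t/5})$, which is exactly why the exponents are $k^{-2s}$.
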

	\begin{proof}
		Lemma~\ref{lem:Mellin} applies to both kernels. The sharper index
		$5K+6$ in \eqref{eq:positive-expansion} follows by bounding the
		Taylor remainder of $F$ after degree $K$ and multiplying by $z$.
		The first coefficients are $[z]H=1$ and $[z]D=\sqrt5/\phi$,
		so both generating functions have radius $1$.
		Equations \eqref{eq:P-zeros} and \eqref{eq:trace} determine the
		two pole sets exactly. They also supply the infinite supports needed
		for the differential and recurrence assertions in the lemma.
		The gamma and exponential factors in \eqref{eq:Bpm} have nonzero
		rational shift quotients at positive integers:
		\[
		\frac{2\Gamma(2n+2j)(5/\pi)^{2n+2j}}
		{2\Gamma(2n)(5/\pi)^{2n}}
		=(5/\pi)^{2j}\prod_{\ell=0}^{2j-1}(2n+\ell).
		\]
		For $M(-n)$ the same formula has $5/(4\pi)$ in place of $5/\pi$.
		A recurrence for either unnormalized sequence would therefore
		give one for the corresponding normalized sequence, a contradiction.
	\end{proof}
	
	The first terms on the two sides are
	\[
	B_+(s)=1-6^{-2s}+11^{-2s}-21^{-2s}+O(26^{-2s}),
	\]
	\[
	B_-(s)=\frac{\sqrt5}{\phi}
	-\frac{\sqrt5}{\phi^2}2^{-2s}
	+\frac{\sqrt5}{\phi^3}3^{-2s}+O(4^{-2s}).
	\]
	In particular,
	\[
	M(s)\sim2\Gamma(2s)(5/\pi)^{2s},\qquad
	M(-s)\sim-\frac{2\sqrt5}{\phi}\Gamma(2s)(5/(4\pi))^{2s}.
	\]
	The cutoff representation for $C_+$ in the original $t$-scale
	follows from \eqref{eq:ML} with $W=H$ by replacing $t$ with $t/5$.
	
	The logarithmic kernel gives a third application with a convergent
	Dirichlet series. Define
	\[
	B_G(s)=\frac{(\pi/5)^{2s}G(s)}{2\Gamma(2s)},\qquad
	C_G(z)=\sum_{n\ge1}B_G(n)z^n.
	\]
	
	\begin{theorem}\label{thm:log-poles}
		The function $B_G$ is entire, with
		$B_G(0)=\log(\sqrt5/\phi)$ and
		$\lim_{n\to\infty}B_G(n)=\phi^{-1}$. Its absolutely convergent
		Dirichlet series is
		\begin{equation}\label{eq:log-dirichlet}
			B_G(s)=\sum_{N\ge1}\frac{c_N}{N^{2s}}\qquad(\Re s>1/2).
		\end{equation}
		The power series $C_G$ has radius $1$ and extends meromorphically as
		\begin{equation}\label{eq:log-partial-fractions}
			C_G(z)=\sum_{N\ge1}\frac{c_Nz}{N^2-z},
			\qquad z\notin\{1,4,9,\ldots\}.
		\end{equation}
		The latter series converges absolutely and locally uniformly on its
		stated domain. Its poles are exactly the positive squares $N^2$,
		simple and of residue $-N^2c_N$.
		The functions $C_G$ and $B_G$ are non-D-finite over $\mathbb C(z)$
		and $\mathbb C(s)$, respectively. Neither $(B_G(n))$ nor $(G(n))$
		is P-recursive over $\mathbb C[n]$.
	\end{theorem}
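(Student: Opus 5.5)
The plan is to realize $B_G$ as the normalized Mellin transform $N$ of Lemma~\ref{lem:Mellin} for the kernel $W=V$, and then read every assertion either off that lemma or off the coefficient bound of Proposition~\ref{prop:c-irrational}.

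\emph{Step 1: identify $B_G$ with $N$ for $W=V$.} Substituting $t=\pi\sqrt{x}/5$ in \eqref{eq:moments} gives
\[
G(s)=2(5/\pi)^{2s}\int_0^\infty t^{2s-1}V(e^{-t})\,dt ,
\]
because $V(e^{-t})=\log(1+H(e^{-t})/\phi)=\log(1+R(e^{-5t})/\phi)$. Hence $B_G(s)=\Gamma(2s)^{-1}\int_0^\infty t^{2s-1}V(e^{-t})\,dt$ for $\Re s>0$. This kernel satisfies the hypotheses of Lemma~\ref{lem:Mellin}:
\begin{itemize}
\item $V$ is analytic in the disk by Lemma~\ref{lem:cusp};
\item $V$ extends continuously to $[0,1]$, with $V(1)=h_0=\log(\sqrt5/\phi)$, by \eqref{eq:endpoints};
\item all $c_N\neq0$ by Proposition~\ref{prop:c-irrational}.
\end{itemize}

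\emph{Step 2: entirety, value at zero, Dirichlet series and limit.} Entirety of $B_G$ and its value at zero follow from Theorem~\ref{thm:log}. The only pole of $G$ is a simple one at $0$ with residue $h_0$, and $1/\Gamma(2s)=2s+O(s^2)$, so the zeros of $1/\Gamma(2s)$ cancel that pole and give $B_G(0)=2\cdot h_0/2=h_0$. For \eqref{eq:log-dirichlet}, when $\Re s>1/2$ the bound \eqref{eq:c-bound} gives $\sum_N|c_N|N^{-2\sigma}<\infty$. We may therefore integrate $\sum_N c_Ne^{-Nt}$ termwise against $t^{2s-1}$, dominating by $\sum_N|c_N|e^{-Nt}=O(1+\log(1/t))$ near $0$; this is integrable against $t^{2\sigma-1}$. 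The limit $B_G(n)\to c_1=\phi^{-1}$ is the case $K=1$ of \eqref{eq:generic-remainder}.

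\emph{Step 3: partial fractions.} Applying Lemma~\ref{lem:Mellin} gives radius $1$ (since $c_1\neq0$ and the coefficients are bounded polynomially, the germ has radius exactly $1$) and the continuation \eqref{eq:ML}. The sharper formula \eqref{eq:log-partial-fractions} comes from the Dirichlet series rather than the cutoff. For $|z|<1$,
\[
\sum_{n\ge1}\sum_N c_N N^{-2n}z^n=\sum_N c_N\,\frac{z}{N^2-z},
\]
with the interchange justified by absolute convergence: $\sum_N|c_N|\sum_n |z|^n N^{-2n}\le \sum_N |c_N|\,|z|/(N^2-1)$ for $N\ge2$, while the $N=1$ term is a geometric series. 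On a compact set avoiding the squares, the terms are $O(|c_N|N^{-2})=O(\log N/N^2)$, so the right side converges absolutely and locally uniformly to a meromorphic function. Its poles are exactly the $N^2$, since $c_N\neq0$, with residue $-N^2c_N$; the identity theorem then gives the continuation. The non-D-finiteness of $C_G$ and $B_G$ and the non-P-recursiveness of $B_G(n)$ are the final assertions of Lemma~\ref{lem:Mellin}. For $G(n)$, as in the proof of Theorem~\ref{thm:poles}, the factor $2\Gamma(2n)(5/\pi)^{2n}$ has rational shift quotients, so a recurrence for $G(n)$ would transfer to $B_G(n)$.

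\emph{Main obstacle.} The delicate point is the continuity hypothesis at $z=1$ in Lemma~\ref{lem:Mellin}. The lemma asks for a continuous extension of $W$ to $[0,1]$ together with a bounded Taylor remainder quotient there; the latter must be checked for $V$. I would obtain it from the endpoint behaviour \eqref{eq:endpoints}, under which $V(e^{-t})\to h_0$ exponentially fast, combined with the fact that partial sums of $c_N$ are bounded by $O(N\log N)$ only. If that check fails, I would instead argue the remainder bound for $B_G$ directly from the Dirichlet series \eqref{eq:log-dirichlet} at large real $s$, which is all the exponential-separation argument needs.
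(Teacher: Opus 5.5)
Your proposal is correct and follows essentially the paper's route: identify $B_G$ with the normalized Mellin transform of $V$, get \eqref{eq:log-dirichlet} by termwise integration using \eqref{eq:c-bound}, get \eqref{eq:log-partial-fractions} by exchanging the geometric series with the Dirichlet series, and invoke Lemma~\ref{lem:Mellin} plus the gamma shift quotient for the non-D-finite and non-P-recursive assertions (using the $K=1$ remainder bound instead of dominated convergence for $B_G(n)\to\phi^{-1}$ is an inessential variation). Two small corrections: first, the majorant $\sum_N|c_N|e^{-Nt}$ is not $O(1+\log(1/t))$ near $t=0$, since that bound holds only for $V(e^{-t})$ itself; \eqref{eq:c-bound} gives only $O(t^{-1}\log(1/t))$, which is integrable against $t^{2\sigma-1}$ exactly when $\sigma>1/2$, and Tonelli with $\sum_N|c_N|N^{-2\sigma}<\infty$ already suffices. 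Second, your ``main obstacle'' is not one, since $\bigl(V(z)-\sum_{N\le K}c_Nz^N\bigr)/z^{K+1}$ is continuous on the compact interval $[0,1]$, with value $c_{K+1}$ at $0$, and is therefore bounded, which is all the lemma's proof uses.
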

	\begin{proof}
		The sole pole of $G$ is canceled by $1/\Gamma(2s)$, so $B_G$
		is entire and its value at zero is the residue of $G$.
		For $\Re s>0$, the substitution $t=\pi\sqrt{x}/5$ gives
		\[
		B_G(s)=\frac1{\Gamma(2s)}
		\int_0^\infty t^{2s-1}V(e^{-t})\,dt.
		\]
		By \eqref{eq:c-bound}, for $\sigma>1/2$,
		\[
		\sum_{N\ge1}|c_N|\int_0^\infty t^{2\sigma-1}e^{-Nt}\,dt
		=\Gamma(2\sigma)\sum_{N\ge1}\frac{|c_N|}{N^{2\sigma}}<\infty.
		\]
		Termwise integration proves \eqref{eq:log-dirichlet}.
		For $|z|<1$, the bound
		\[
		\sum_{N\ge1}\frac{|c_N||z|}{N^2-|z|}
		\le\frac{|z|}{1-|z|}\sum_{N\ge1}\frac{|c_N|}{N^2}<\infty
		\]
		justifies exchanging the geometric series and gives
		\eqref{eq:log-partial-fractions}. On any compact set avoiding the
		positive squares, the tail is bounded by a constant times
		$\sum_N|c_N|/N^2$. Thus it converges locally uniformly there.
		Only the $N$th term has a pole at $N^2$, and $c_N\ne0$ by
		Proposition~\ref{prop:c-irrational}. This proves the exact pole set
		and residues. Dominated convergence in \eqref{eq:log-dirichlet}
		gives $B_G(n)\to c_1=\phi^{-1}$ and hence radius $1$.
		
		The kernel $V$ is analytic in the disk and continuous on $[0,1]$,
		with $V(1)=\log(\sqrt5/\phi)$ by \eqref{eq:endpoints}.
		Lemma~\ref{lem:Mellin} therefore applies with $W=V$ and proves
		the two differential assertions and the recurrence assertion for
		$B_G(n)$. Finally,
		$G(n)=2\Gamma(2n)(5/\pi)^{2n}B_G(n)$; the nonzero polynomial
		shift quotient displayed in the proof of Theorem~\ref{thm:poles}
		transfers a putative recurrence for $G(n)$ to one for $B_G(n)$.
	\end{proof}
	
	\begin{corollary}\label{cor:M-nonholonomic}
		The meromorphic function $M(s)$ is not D-finite over $\mathbb C(s)$.
	\end{corollary}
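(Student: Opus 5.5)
The plan is to reduce the statement to Theorem~\ref{thm:poles}, which already shows that $B_+$ is not D-finite over $\mathbb C(s)$. The two functions are related by
\[
M(s)=2\Gamma(2s)\left(\frac5\pi\right)^{2s}B_+(s).
\]
So it suffices to show that the factor $g(s)=2\Gamma(2s)(5/\pi)^{2s}$ is D-finite over $\mathbb C(s)$, and that the quotient of a D-finite function by $g$ stays D-finite. Then D-finiteness of $M$ would force D-finiteness of $B_+=M\cdot g^{-1}$, contradicting the theorem.

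The first step is the factor itself. Its logarithmic derivative is
\[
\frac{g'}{g}=2\psi(2s)+2\log(5/\pi),
\]
which is not rational, so $g$ does not satisfy a first-order equation. I would therefore not try to prove that $g$ is D-finite; in fact $\Gamma$ is not D-finite, because $\psi$ is not algebraic. The difference structure has to be used instead.

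The cleaner route is to work with difference equations and repeat the exponential-separation argument of Lemma~\ref{lem:Mellin} directly for $M$. Suppose $\sum_{j=0}^r p_j(s)M^{(j)}(s)=0$ with polynomial $p_j$, not all zero. Write $M=gB_+$ and expand by Leibniz:
\[
M^{(j)}=\sum_{i\le j}\binom ji g^{(j-i)}B_+^{(i)}.
\]
The ratios $g^{(\ell)}/g$ are polynomials in $\psi(2s),\psi'(2s),\dots$ and $\log(5/\pi)$. As real $s\to+\infty$ they have asymptotic expansions in powers of $\log s$ and $s^{-1}$, so they grow at most like powers of $\log s$.

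Dividing the equation by $g$ gives
\[
\sum_i q_i(s)B_+^{(i)}(s)=0,
\]
where each $q_i$ is a polynomial in $s$, $\log s$ and $s^{-1}$, up to an error $O(s^{-A})$ for any $A$. Now insert \eqref{eq:positive-expansion} and its derivatives. These are available through the derivative assertion of Lemma~\ref{lem:Mellin} applied with $W=H$, the lemma quoted in the proof of Theorem~\ref{thm:poles}. Then compare the terms $(5m+1)^{-2s}$ successively, exactly as in that proof. For each $m$ with $P_m\ne0$, the combination
\[
Q_m(s)=\sum_i q_i(s)\bigl(-2\log(5m+1)\bigr)^i
\]
must be $O(s^{d}(\log s)^e\theta^{2s})$ for some $\theta<1$. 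A function of this log-polynomial type with such decay must vanish identically in its asymptotic scale.

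Infinitely many nonzero $P_m$ are guaranteed by \eqref{eq:P-zeros}. Hence the polynomial $\sum_i q_i X^i$ has infinitely many roots $X=-2\log(5m+1)$, so every $q_i$ vanishes. Triangularity of the Leibniz expansion then unwinds this: the top coefficient gives $p_r=0$, and induction downward gives $p_j=0$ for all $j$, a contradiction.

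\textbf{Main obstacle.} The delicate point is making rigorous the claim that the $q_i$ lie in an asymptotic scale where "$O(\theta^{2s})$ implies identically zero." It is not enough that each $q_i$ has an asymptotic expansion. One needs each $q_i$ to be an honest function with an asymptotic expansion in $s^a(\log s)^b$ that determines it, so that subtracting earlier terms preserves the form.

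To handle this, I would first try to avoid asymptotics. The idea is to note that $\psi^{(\ell)}(2s)$ equals an elementary function, $\log(2s)$ plus an asymptotic series, plus exponentially small corrections. Then I would treat the identity as one among the functions $s^a(\log s)^b(5m+1)^{-2s}$ and argue by the nonvanishing of leading terms: Hardy-field-type growth comparisons, order by order. If that becomes messy, the fallback is to show that the leading-order coefficient extraction, taking highest powers of $\log s$ and then of $s$, already forces $\sum_i \deg$-leading coefficients times $(-2\log(5m+1))^i$ to vanish for infinitely many $m$. This suffices for the root-counting conclusion.
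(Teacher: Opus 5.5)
There is a genuine gap, and it sits at the step you flag as the main obstacle. The successive comparison borrowed from Lemma~\ref{lem:Mellin} needs each earlier block to vanish \emph{identically}. To isolate the $(5m+1)^{-2s}$ term you must subtract $Q_0,\dots,Q_{m-1}$ exactly, and the root count at fixed $s$ needs $Q_m\equiv0$ exactly. Your tools give only exponential smallness.

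Unlike in the lemma, your $q_i$ are not polynomials: they involve $\psi^{(\ell)}(2s)$, whose Stirling series diverges. Every truncation therefore leaves an $O(s^{-A})$ error, not an exponentially small one, so your claim about ``exponentially small corrections'' is false. Such errors swamp every term $(5m+1)^{-2s}$. A Hardy-field comparison controls only leading terms and cannot certify exact vanishing. So neither your first attempt nor your fallback produces infinitely many roots $-2\log(5m+1)$.

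As an aside, ``$\Gamma$ is not D-finite because $\psi$ is not algebraic'' is not a valid reason. For example, $\cos s$ is D-finite with transcendental logarithmic derivative $-\tan s$. You were right, though, to abandon the closure argument; the paper says the same.

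The missing observation is that the first block alone already gives the contradiction. Infinitely many nonzero $P_m$, the root count and the triangular unwinding are all unnecessary.

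Since $5\cdot0+1=1$, you have $Q_0=q_0=\sum_jp_jg^{(j)}/g$. Combine $B_+^{(i)}=\delta_{i0}+O(6^{-2s})$ with the at most $s^{d}(\log s)^{r}$ growth of the $q_i$; this gives $q_0=O(s^{d}(\log s)^{r}6^{-2s})$.

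On the other hand, Stirling's formula gives $(\log g)'=2\log s+O(1)$ and $(\log g)^{(k)}=O(s^{1-k})$ for $k\ge2$. Hence $g^{(j)}/g=(2\log s)^j(1+o(1))$. Let $d$ be the largest degree among the nonzero $p_j$, and $j^*$ the largest index attaining it. Then $q_0\sim c\,s^{d}(2\log s)^{j^*}$ with $c\ne0$, which is not exponentially small. This is the contradiction.

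This is exactly the paper's proof, with $J=g$, except that the paper avoids $B_+$ and Leibniz. It bounds $(J-M)^{(j)}$ directly from $0<e^{-\pi\sqrt x/5}-f(x)<e^{-6\pi\sqrt x/5}$. From this it deduces $M^{(j)}/M\sim(2\log s)^j$ and then extracts the leading term of the putative equation.
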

	\begin{proof}
		This requires a direct argument, since the gamma factor in the
		normalization does not give the conclusion by a closure property.
		Put $J(s)=2\Gamma(2s)(5/\pi)^{2s}$.
		The continued fraction gives
		$0<e^{-\pi\sqrt{x}/5}-f(x)<e^{-6\pi\sqrt{x}/5}$.
		For real $s>1$, differentiation under the integral and
		$|\log x|^j\le j!(x+x^{-1})$ yield
		\[
		|(J-M)^{(j)}(s)|
		\le j!\{6^{-2s-2}J(s+1)+6^{-2s+2}J(s-1)\}.
		\]
		Stirling's formula and its differentiated forms
		\cite[\S\S5.11 and 5.15]{NIST2010} give
		$(\log J)'(s)=2\log s+O(1)$ and
		$(\log J)^{(r)}(s)=O_r(s^{1-r})$ for $r\ge2$.
		Repeated differentiation therefore yields
		$J^{(j)}(s)/J(s)\sim(2\log s)^j$.
		The displayed error, divided by $J(s)$, is
		$O_j(s^2\,6^{-2s})$, by the gamma recurrence.
		Consequently $M^{(j)}(s)/M(s)\sim(2\log s)^j$ for each fixed $j$.
		After clearing denominators in a putative differential equation, choose
		a coefficient of maximal degree and, among those, the largest
		derivative index. Division by its leading power of $s$ and
		$\log s$, and by $M(s)$, leaves a single nonzero constant.
		This contradiction proves the assertion.
	\end{proof}
	
	\appendix
	\section{A bound for the numerical summation}\label{app:error}
	
	We bound the omitted terms in \eqref{eq:summation} for $M(1)$ and
	$M(2)$ at the balanced split $a=b=2\pi/5$. The cutoff $N$ means
	that terms with $5m+1\le N$ and $k\le N$ are retained in the two
	series; the elementary term is retained exactly.
	
	Let $a=2\pi/5$, $\rho=2e^{-a}<1$, and let $h_k=[z^k]H(z)$; thus
	$h_{5m+1}=P_m$ and all other $h_k$ vanish. The product for $F$,
	with its logarithm normalized at zero, gives
	\[
	|\log F(w)|\le\sum_{j\ge1}\frac{|w|^j}{1-|w|^j}
	\le\frac{|w|}{(1-|w|)^2}\qquad(|w|<1).
	\]
	Taking $w=z^5$, we obtain, on $|z|\le1/2$,
	\[
	|H(z)|\le\tfrac12\exp\left(\frac{1/32}{(1-1/32)^2}\right)<\tfrac35,
	\qquad
	|D(z)|\le\frac{\sqrt5(3/5)}{\phi-3/5}<\tfrac53.
	\]
	Hence $|h_k|\le(3/5)2^k$ and $|d_k|\le(5/3)2^k$.
	For $p=2$ or $4$, define
	\[
	U_J(p)=\frac{\rho^{J+1}}{1-\rho}
	\left\{\frac65\left(\frac5\pi\right)^p(p-1)!
	\sum_{j=0}^{p-1}\frac{a^j(J+1)^{j-p}}{j!}
	+\frac{10}{3}\left(\frac{4\pi}{5}\right)^p
	\frac{a^{-p-1}}{J+1}\right\}.
	\]
	For integers $0\le N<J$, the absolute truncation error for
	$M(p/2)$ at cutoff $N$ is bounded by
	\begin{equation}\label{eq:certified-error}
		\begin{aligned}
			&2\left(\frac5\pi\right)^p(p-1)!
			\sum_{k=N+1}^J\frac{|h_k|e^{-ak}}{k^p}
			\sum_{j=0}^{p-1}\frac{(ak)^j}{j!}\\
			&\quad+
			2\left(\frac{4\pi}{5}\right)^p a^{-p-1}
			\sum_{k=N+1}^J\frac{|d_k|e^{-ak}}k
			+U_J(p).
		\end{aligned}
	\end{equation}
	Indeed, $\Gamma(p,y)=(p-1)!e^{-y}\sum_{j=0}^{p-1}y^j/j!$
	and $\Gamma(-p,y)\le y^{-p-1}e^{-y}$ bound the finite block.
	For $k\ge J+1$ and $0\le j\le p-1$, one has
	$k^{j-p}\le(J+1)^{j-p}$. The Cauchy estimates and the geometric
	sum $\sum_{k>J}\rho^k=\rho^{J+1}/(1-\rho)$ therefore bound the
	remaining tail by $U_J(p)$, without an asymptotic coefficient estimate.
	This bound controls truncation of the exact series; certification of
	computed decimal values also requires bounds for rounding errors.

\end{document}